\newtheorem{teorema}{Theorem}[section] % 1st argument is your name for it
\newtheorem{theorem}{Theorem}[section]
\newtheorem{corollary}[teorema]{Corollary}
\newtheorem{proposition}[teorema]{Proposition}
\newtheorem{definition}[teorema]{Definition}
\newtheorem{question}{Question}
\newtheorem{example}[teorema]{Example}
\newtheorem{remark}[teorema]{Remark}
\def\RR{\mathbb R}
\author[]{Teresa Berm\'udez}
\email{tbermude@ull.es}
\author[]{Antonio Bonilla}
\email{abonilla@ull.es}
\author[]{N. S. Feldman}
\email{feldmanN@wlu.edu}
\address{Departamento de An\'{a}lisis Matem\'{a}tico, 
  Universidad de La Laguna, 
  38271 La Laguna (Tenerife), Spain}
\address{Departamento de An\'{a}lisis Matem\'{a}tico, 
  Universidad de La Laguna, 
  38271 La Laguna (Tenerife), Spain}
\address{Dept. of Mathematics, Washington and Lee University, Lexington VA 24450}
\date{\today}
\title{On convex-cyclic operators}
\keywords{Ces\`{a}ro hypercylic, convex-cyclic, convex polynomial, convex-transitive, cyclic, diagonal operator,  $\varepsilon$-hypercyclic, hypercyclic, hypercyclic with support $N$, $m$-isometry, multiplication operator,  weakly hypercyclic.}
\begin{document}

\maketitle

\begin{abstract}
We give a Hahn-Banach Characterization for convex-cyclicity.  We also obtain an example of  a bounded linear operator $S$ on a Banach space with $\sigma_{p}(S^*)=\emptyset$ such that $S$ is convex-cyclic, but $S$ is not weakly hypercyclic and  $S^2 $ is
not convex-cyclic. This solved two questions of Rezaei in \cite{Rezaei} when $\sigma_p(S^*)=\varnothing$.
%Recently, Le\'{o}n-Saavedra and Romero de la Rosa \cite{LeRo} provide an example of a convex-cyclic operator $S$ such that the power $S^n$ fails to be convex-cyclic with $\sigma _p(S^*)\neq \varnothing$. In fact they solved tree questions posed by Rezaei in \cite{Rezaei}. Moreover, we prove that $m$-isometries are not convex-cyclic and that $\varepsilon$-hypercyclic operators are convex-cyclic.
 We also characterize the diagonalizable normal operators that are convex-cyclic and give a condition on the eigenvalues of an arbitrary  operator for it to be convex-cyclic.  We  show that certain adjoint multiplication operators are convex-cyclic and show that some are convex-cyclic but no convex polynomial of the operator is hypercyclic.  Also some adjoint multiplication operators are convex-cyclic but not 1-weakly hypercyclic.

\end{abstract}

\section{Introduction}

Let $X$ be a Banach space and let $L(X)$ denote the algebra  of  all bounded  linear  operators on $X$.  A bounded linear operator  $T$ on $X$ is \emph{cyclic} if  there exists a (cyclic) vector $x$ such that the linear span  of the orbit of $x$, $Orb(T,x)= \{ T^n x: n=0,1, \cdots \}$, is dense in $X$.   An operator $T$ is called \emph{convex-cyclic}  if there exists a vector  $x\in X$ such that the convex  hull of $Orb(T,x)$ is dense in $X$ and such a vector $x$ is said to be a \emph{convex-cyclic vector} for $T$. Clearly all convex-cyclic operators   are cyclic.  Following Rezaei~\cite{Rezaei}  we will say that a polynomial $p$ is a \textit{convex polynomial} if it is a (finite) convex combination of monomials $\{1, z, z^2, \ldots \}$.  So, $p(z) = a_0 + a_1 z + \cdots + a_nz^n$ is a convex polynomial if $a_k \geq 0$ for all $k$ and $\sum_{k=0}^n a_k = 1$.  Then the convex hull of an orbit is $co(Orb(T,x)) = \{ p(T)x : p \text{ is a convex polynomial} \}$.

A bounded linear operator  $T\in L(X)$ is said to be \emph{hypercyclic} (\emph{weakly hypercyclic} \cite{ChanSanders}) if there is a vector $x\in X$ whose orbit is dense in the norm (weak) topology of $X$.   An operator $T$ is said to be \emph{weakly-mixing} if $T\oplus T$ is hypercyclic in $X\oplus X$.

 There are certainly examples of convex-cyclic operators that are not hypercyclic. However within  certain classes of operators, hypercyclicity and convex-cyclicity are equivalent.   This is true for unilateral weighted backward shifts on $\ell ^p(\mathbb{N})$ and composition operators on the classical Hardy space, see \cite{Rezaei}.

 What follows is a list of questions that are answered in this paper.  First, notice that
every weakly hypercyclic operator  is convex-cyclic since the norm and the weak closure of a convex set in a Banach space coincide.  In \cite{Rezaei} Rezaei asks the following question:

\begin{question}\cite[Question 5.4]{Rezaei} \label{weakly-hypercyclic}
Is every convex-cyclic operator acting on an infinite dimensional  Banach space weakly hypercyclic?
\end{question}

According to Feldman \cite{Feldman}, $T$ is called \emph{1-weakly hypercyclic} if there is an $x\in X$ such that $f(Orb(T,x))$ is dense in $\Bbb C$ for each non-zero $f\in X^*$. Every weakly hypercyclic operator is 1-weakly hypercyclic and $1$-weakly hypercyclic operators are convex-cyclic. Thus it is also natural to ask if every convex-cyclic operator acting on an infinite dimensional Banach space 1-weakly hypercyclic?

\ \par

 Ansari \cite{Ansari}  showed that  powers of hypercyclic  operators on Banach spaces are hypercyclic  operators.   The same result was proven for operators on locally convex spaces by Bourdon and  Feldman~\cite{BoFe}. These results do not have analogues for cyclic operators. The forward unilateral shift $S$  on $\ell^2(\mathbb{N} )$ is cyclic but $S^2$ is not cyclic, because the codimension of the range of $S^2$ is two.  What about powers of convex-cyclic operators? Le\'on and Romero in \cite{LeRo}  give  examples of convex-cyclic operators where $\sigma_p(S^*)$ is non-empty that have powers that are not convex-cyclic. Thus is natural to ask:

 \begin{question}\label{potencias}\cite[Question 5.5]{Rezaei}
 If $S: X\rightarrow X$ a convex-cyclic operator on a Banach space $X$ with $\sigma_{p}(S^*)= \emptyset$, then is $S^{n}$   convex-cyclic for every integer $n>1$?
\end{question}

For a positive integer $m$ and a positive real number $p$, an operator $T\in L(X)$ is called an \emph{$(m,p)$-isometry} if for any $x\in X$,

$$
\sum _{k=0}^m (-1)^{m-k}{ m\choose k}\|T^{k}x\|^p =0\; .
$$

An operator $T$ is called an \emph{$m$-isometry} if it is an $(m,p)$-isometry for some $p> 0$. See \cite{agler}, \cite{Bayart} and \cite{hms}.  Faghih and Hedayatian  proved in \cite{FaHe} that $m$-isometries on a Hilbert space are not weakly hypercyclic.  However, there are isometries that are weakly supercyclic \cite{Sanders05} (in particular cyclic). Thus a natural question is the following:

 \begin{question}\label{m-isometrias}
Can an  $m$-isometry be convex-cyclic?
\end{question}

In \cite{BaGrMu}, Badea,  Grivaux and  M\"uller introduced the concept of an $\varepsilon$-hypercyclic operator.
\begin{definition}
{\rm Let $\varepsilon \in (0,1)$ and let $T: X\rightarrow X$ be a continuous linear operator. A vector $x\in X$ is called   an \emph{$\varepsilon$-hypercyclic vector} for $T$ if for every non-zero vector $y\in X$ there exists a non-negative integer $n$ such that
$$
\|T^nx-y\|\le \varepsilon \|y\|.
$$
The operator $T$ is called \emph{$\varepsilon$-hypercyclic} if it has an $\varepsilon$-hypercyclic vector.}
\end{definition}

 In \cite{BaGrMu} it was shown that for every $\varepsilon \in (0,1)$, there exists an $\varepsilon$-hypercyclic operator on the space $\ell ^1(\Bbb N)$ which is not hypercyclic. Bayart in \cite{Bayart-1} extended this result to separable Hilbert spaces. Thus it is natural to ask if:

\begin{question}\label{varepsilon-hypercyclic}
Is every $\varepsilon$-hypercyclic operator  also convex-cyclic?
\end{question}

An operator $T\in L(X)$ is called \emph{hypercyclic with support $N$} is there exists a vector $x\in X$ such that  the set
$$
\left\{  T^{k_1}x+ T^{k_2} x+ \cdots + T^{k_N} x\;\;\; : \;\;\; k_1,\; \ldots , \; k_N\in \mathbb{N} \right\}
$$
is dense in $X$.

\begin{remark}{\rm
Notice that if $T$ is hypercyclic with support $N$, then $T$ is convex-cyclic. In fact,
 for any $y\in X$, there exist $k_1,\ldots , k_N\in \mathbb{N} $ such that $T^{k_1}x+ T^{k_2} x+ \cdots + T^{k_N} x \approx Ny$, thus
 $$
 \frac{T^{k_1}x+ T^{k_2} x+ \cdots + T^{k_N} x}{N} \approx y \;.
 $$}
\end{remark}

Any hypercyclic operator   with support $N$ satisfies that $\sigma_p (T^*)$ is the empty set \cite[Proposition 3.1]{Bayart-Costakis}. However, there are convex-cyclic operators such that $\sigma_p (T^*)$ is non-empty. So,  hypercyclicity with support $N$ is not equivalent to convex-cyclicity.

In  \cite{Rezaei}, Rezaei characterizes which diagonal matrices on $\mathbb{C}^n$ are convex-cyclic as those whose eigenvalues are distinct and belong to the set $\mathbb{C} \setminus (\overline{\mathbb{D}} \cup \mathbb{R})$.  This naturally leads to the question about infinite diagonal matrices and even the following more general question.

\begin{question} \label{complete set of eigenvectors}
If $T$ is a continuous linear operator on a complex Banach space $X$ and $T$ has a complete set of eigenvectors whose eigenvalues are distinct, and belong to the set $\mathbb{C} \setminus (\overline{\mathbb{D}} \cup \mathbb{R})$, then is $T$ convex-cyclic?
\end{question}

In this paper, we answer these five questions and also give some examples.  The paper is organized as follows. In Section 2, we give the Hahn-Banach characterization  for convex-cyclicity. In Section 3, we  give an example of an operator $S$ that is convex-cyclic but $S^2$  is not convex-cyclic and thus $S$ is not weakly hypercyclic, this answers Questions \ref{weakly-hypercyclic} and \ref{potencias} when $\sigma_p(S^*)=\varnothing$. In Section 4, we prove that  $m$-isometries are not convex-cyclic, answering Question \ref{m-isometrias}. In Section 5 we prove that any $\varepsilon$-hypercyclic  operator is convex-cyclic. In fact, every $\varepsilon$-hypercyclic vector is a convex-cyclic vector.  Finally, in Section 6 we answer Question 5 affirmatively and give examples of such operators including diagonal operators and adjoints of multiplication operators.

\ \par

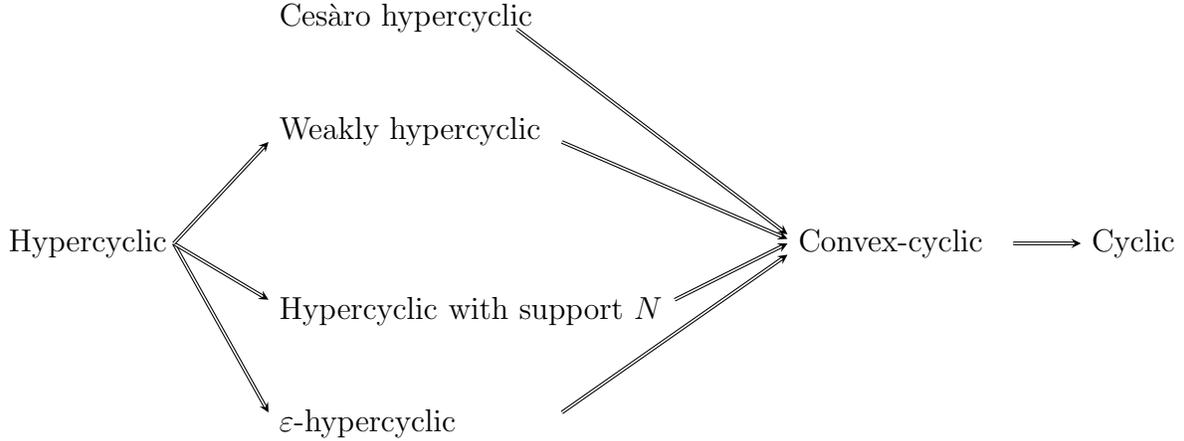
\begin{figure}[h]
\hspace*{-1cm}
\begin{tikzpicture}[scale=0.3,>=stealth]
 \node[right] at (-5,0) {Hypercyclic};
  \node[right] at (7,10) {Ces\`{a}ro hypercyclic};
 \node[right] at (7,5) {Weakly hypercyclic};
  \node[right] at (7,-3) {Hypercyclic with support $N$};
  \node[right] at (7,-8) {$\varepsilon$-hypercyclic};
  \node[right] at (30,0) {Convex-cyclic};
 \node[right] at (43,0) {Cyclic};
 \draw[double, ->] (2.8,0) -- (7,-7.5);
  \draw[double, ->] (2.8,0) -- (7,-2.5);
  \draw[double, ->] (2.8,0) -- (7,4.5);
    %\draw[double, ->] (2.8,0) -- (7,9.5);
    % \draw[double, ->] (-5,-0.5) -- (7,-2.5);
\draw[double, ->] (20,-7.5) -- (30,-.5);
  \draw[double, ->] (25,-2.5) -- (30,0);
  \draw[double, ->] (20,4.5) -- (30,0.2);
    \draw[double, ->] (18,9.5) -- (30,0.4);
\draw[double, ->] (40,0) -- (43,0);
 \end{tikzpicture}
\caption{Implications between different definitions related with hypercyclicity and cyclicity.}
\end{figure}

\section{The Hahn-Banach Characterization for Convex-Cyclicity}

Rezaei gave a (universality) criterion for an operator to be convex-cyclic \cite[Theorem 3.10]{Rezaei}. In the following result, using the Hahn-Banach Separation Theorem, we give a necessary and sufficient condition for a set to have a dense convex hull, as a result we get a criterion for a vector to be a convex-cyclic vector for an operator.

\begin{proposition}  \label{P:linearfunctionals}
Let $X$ be a locally convex space over the real or complex numbers and let $E$ be a nonempty subset of $X$.  The following are equivalent:
\begin{enumerate}
\item The convex hull of $E$ is dense in $X$.
\item For every nonzero continuous linear functional $f$ on $X$ we have that the convex hull of $Re(f(E))$ is dense in $\mathbb{R}$.
\item For every nonzero continuous linear functional $f$ on $X$ we have that $$\sup Re(f(E)) = \infty \text{ and } \inf Re(f(E)) = -\infty.$$
\item For every nonzero continuous linear functional $f$ on $X$ we have that $$\sup Re(f(E)) = \infty.$$
\end{enumerate}
\end{proposition}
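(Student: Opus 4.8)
The plan is to establish the equivalence by proving the cyclic chain $(1)\Rightarrow(2)\Rightarrow(3)\Rightarrow(4)\Rightarrow(1)$, with the Hahn--Banach Separation Theorem entering only in the last, and hardest, implication. Throughout I would lean on three elementary facts: a nonzero continuous linear functional on a topological vector space is surjective onto the scalar field; a continuous map sends dense sets into dense subsets of their images (i.e. $g(\overline{D})\subseteq\overline{g(D)}$); and the convex hull of a set $S\subseteq\mathbb{R}$ is exactly the interval with endpoints $\inf S$ and $\sup S$.

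For $(1)\Rightarrow(2)$, given a nonzero continuous linear functional $f$ I would pass to the continuous real-linear functional $g:=Re\circ f$. A one-line computation (replacing $x$ by $ix$ in the complex case, so that $Re(f(ix))=-Im(f(x))$) shows $g\neq 0$ whenever $f\neq 0$, whence $g$ maps $X$ onto $\mathbb{R}$. Since $g$ is real-linear it preserves convex combinations, so $g(co(E))=co(g(E))=co(Re(f(E)))$; and because $co(E)$ is dense and $g$ is continuous with $g(X)=\mathbb{R}$, the set $g(co(E))$ is dense in $\mathbb{R}$. That is precisely $(2)$. For $(2)\Rightarrow(3)$ I would note that $co(Re(f(E)))$ is an interval with endpoints $\inf Re(f(E))$ and $\sup Re(f(E))$, and an interval is dense in $\mathbb{R}$ only when it is unbounded in both directions, forcing $\sup Re(f(E))=\infty$ and $\inf Re(f(E))=-\infty$. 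The implication $(3)\Rightarrow(4)$ is immediate.

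The substantive step is $(4)\Rightarrow(1)$, which I would prove by contraposition. If $co(E)$ is not dense, then its closure $\overline{co(E)}$ is a proper closed convex subset of $X$, so some $x_0\notin\overline{co(E)}$ exists. The Hahn--Banach Separation Theorem then yields a continuous real-linear functional $g$ and a real number $\alpha$ with $g(x_0)>\alpha\geq g(y)$ for all $y\in\overline{co(E)}\supseteq E$; in particular $g\neq 0$ and $\sup g(E)\leq\alpha<\infty$. In the real case $f:=g$ already violates $(4)$. In the complex case I would upgrade $g$ to a continuous complex-linear functional via the standard device $f(x):=g(x)-i\,g(ix)$, which has $Re(f)=g$, is continuous, and is nonzero; then $\sup Re(f(E))=\sup g(E)<\infty$, again contradicting $(4)$.

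I expect the main obstacle to be exactly this last implication: one must separate the point from the \emph{closed} convex hull (so the separation theorem applies to a closed convex set), and then convert the real-linear functional it supplies into a complex-linear one when the field is $\mathbb{C}$, verifying that the replacement $f(x)=g(x)-i\,g(ix)$ is continuous, complex-linear, nonzero, and has real part $g$. By comparison the other three implications are routine once one records that the convex hull of a subset of $\mathbb{R}$ is an interval and that $Re\circ f$ is a surjective real-linear functional.
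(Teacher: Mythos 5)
Your proposal is correct and follows essentially the same route as the paper: the implications $(1)\Rightarrow(2)\Rightarrow(3)\Rightarrow(4)$ are the routine ones (which you spell out in more detail than the paper, which simply calls them clear), and the key implication $(4)\Rightarrow(1)$ is obtained by separating a point from the closure of $co(E)$ via the Hahn--Banach Separation Theorem to produce a functional whose real part is bounded above on $E$. The only cosmetic difference is that you construct the complex-linear functional $f(x)=g(x)-ig(ix)$ by hand, whereas the paper cites a version of the separation theorem that already delivers a continuous linear functional $f$ with $Re(f(x))<Re(f(p))$ on $co(E)$.
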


\begin{proof}
Let $\mathbb{F}$ denote either the real or complex numbers.  Clearly $(1) \Rightarrow (2) \Rightarrow (3) \Rightarrow (4)$ holds.  Now assume that \textit{(4)} holds and by way of contradiction, assume that $co(E)$ is not dense in $X$.    Then there exists a point $p \in X$ that is not in the closure of $co(E)$.  So, by the Hahn-Banach Separation Theorem  (\cite[Theorem 3.13]{Conway}), there exists a continuous linear functional $f$ on $X$ so that $Re(f(x)) < Re(f(p))$ for all $x \in co(E)$.  It follows that $Re(f(E))$ is bounded from above and thus $\sup Re(f(E)) \neq \infty$.  This contradicts our assumption that \textit{(4)} is true.  Thus it must be the case that if \textit{(4)} holds, then \textit{(1)} does also.  Hence all four conditions are equivalent.
\end{proof}

\begin{corollary}[The Hahn-Banach Characterization for Convex-Cyclicity]  \label{C:linearfunctionalcriteria}
Let $X$ be a locally convex space over the real or complex numbers, $T:X \to X$ a continuous linear operator, and $x \in X$.  Then the following are equivalent:
\begin{enumerate}
\item The convex hull of the orbit of $x$ under $T$ is dense in $X$.
\item For every non-zero continuous linear functional $f$ on $X$ we have $$\sup Re(f(Orb(T,x)) )= \infty.$$
\end{enumerate}
\end{corollary}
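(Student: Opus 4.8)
The plan is to deduce this immediately from Proposition~\ref{P:linearfunctionals} by specializing the abstract set $E$ to the orbit. First I would set $E = Orb(T,x) = \{ T^n x : n = 0, 1, \ldots \}$. This set is nonempty since $x = T^0 x \in E$, so the hypothesis of Proposition~\ref{P:linearfunctionals} is satisfied, and $T$ being continuous guarantees each $f(T^n x)$ makes sense for continuous linear functionals $f$.

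Next I would observe that condition (1) of the corollary is literally condition (1) of Proposition~\ref{P:linearfunctionals} with this choice of $E$, namely that $co(Orb(T,x))$ is dense in $X$; and condition (2) of the corollary is literally condition (4) of the proposition, namely that $\sup Re(f(Orb(T,x))) = \infty$ for every nonzero continuous linear functional $f$. Since Proposition~\ref{P:linearfunctionals} asserts the equivalence of its conditions (1) and (4) (indeed of all four), the equivalence of the two conditions in the corollary follows at once.

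I do not expect any genuine obstacle here, since all the work has already been done in the proposition via the Hahn--Banach Separation Theorem. The only point requiring a word of care is the verification that the orbit is nonempty so that the proposition applies, and the bookkeeping that the one-sided condition $\sup Re(f(\cdot)) = \infty$ is the one being invoked (rather than the two-sided condition (3)); the proposition already shows these are interchangeable, so the stated form of the corollary is the cleanest to record.

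\begin{proof}
Apply Proposition~\ref{P:linearfunctionals} with $E = Orb(T,x)$, which is nonempty since $x \in Orb(T,x)$. Condition (1) of the corollary is exactly condition (1) of Proposition~\ref{P:linearfunctionals}, and condition (2) of the corollary is exactly condition (4) of Proposition~\ref{P:linearfunctionals}. Since those two conditions are equivalent, the result follows.
\end{proof}
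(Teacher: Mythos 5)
Your proof is correct and is exactly the argument the paper intends: the corollary is stated as an immediate consequence of Proposition~\ref{P:linearfunctionals} applied to $E = Orb(T,x)$, using the equivalence of conditions (1) and (4) there. Nothing further is needed.
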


Below are some simple consequences of the Hahn-Banach characterization for convex-cyclic vectors.

As it was pointed in the Introduction the range of a cyclic operator may  not be dense. For example, the range of the unilateral shift has codimension one.  However, the closure of the range of a cyclic operator has codimension at most one. Notice that the range of hypercyclic operator  is always dense.   The Hahn-Banach characterization of convex-cyclicity easily shows that  convex-cyclic operators must also have dense range, see the following result.

\begin{proposition}\label{2.3}
If $T$ is a convex-cyclic operator on a locally convex space $X$, then $T$ has dense range.
\end{proposition}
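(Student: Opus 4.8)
The plan is to combine the standard duality characterization of dense range with the Hahn-Banach criterion from Corollary \ref{C:linearfunctionalcriteria}. Recall that a linear subspace $M \subseteq X$ is dense precisely when the only continuous linear functional annihilating $M$ is the zero functional; this is itself an immediate consequence of the Hahn-Banach Separation Theorem already invoked above. So it suffices to show that if $f$ is a continuous linear functional on $X$ vanishing identically on $\mathrm{ran}(T)$, then $f = 0$.

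First I would fix a convex-cyclic vector $x$ for $T$ and suppose $f$ is a continuous linear functional with $f(Ty) = 0$ for all $y \in X$. The key observation is that every term of the orbit except possibly the zeroth lies in the range of $T$: indeed $T^n x = T(T^{n-1}x) \in \mathrm{ran}(T)$ for every $n \geq 1$. Hence $f(T^n x) = 0$ for all $n \geq 1$, so that
\[
f(Orb(T,x)) \subseteq \{ f(x) \} \cup \{ 0 \},
\]
a finite, hence bounded, subset of the scalar field. In particular $\mathrm{Re}(f(Orb(T,x)))$ is bounded above, so $\sup \mathrm{Re}(f(Orb(T,x))) < \infty$.

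The conclusion is then a contrapositive application of the characterization. By Corollary \ref{C:linearfunctionalcriteria}, condition (2), since $x$ is a convex-cyclic vector we must have $\sup \mathrm{Re}(f(Orb(T,x))) = \infty$ for \emph{every} nonzero continuous linear functional $f$. Comparing this with the previous paragraph forces $f = 0$. Thus the only continuous linear functional annihilating $\mathrm{ran}(T)$ is the zero functional, and therefore $\mathrm{ran}(T)$ is dense in $X$.

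I expect no serious obstacle here: the argument is a direct application of the Hahn-Banach characterization, and the only point that genuinely needs checking is that the orbit enters $\mathrm{ran}(T)$ from the first iterate onward, so that any functional killing the range automatically bounds (indeed trivializes) the real parts along the orbit, contradicting convex-cyclicity unless the functional is zero.
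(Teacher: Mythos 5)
Your argument is correct and is essentially the same as the paper's: both reduce the claim to the existence of a nonzero continuous linear functional annihilating $R(T)$, observe that $T^n x \in R(T)$ for $n \geq 1$ forces $\sup Re(f(Orb(T,x))) < \infty$, and conclude via Corollary~\ref{C:linearfunctionalcriteria} that this contradicts convex-cyclicity. The only difference is cosmetic: you phrase it as showing every annihilating functional is zero, while the paper argues by contradiction from the assumption that the range is not dense.
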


\begin{proof}
Suppose that $T$ is a convex-cyclic operator and let $x$ be a convex-cyclic vector for $T$, and by way of contradiction, suppose that $T$ does not have dense range.  Then there exists a continuous linear functional $f$ such that $f(R(T)) = \{0\}$, where $R(T)$ denotes the range of $T$.    By the Hahn-Banach characterization, Corollary~\ref{C:linearfunctionalcriteria}, we must have that $\sup Re(f(Orb(T,x)) = \infty$.  However, since $T^nx \in R(T)$ for all $n \geq 1$ it follows that $f(T^nx) = 0$ for all $n \geq 1$.  So, $\sup Re(f(Orb(T,x)) = \sup Re(\{f(T^0x), 0\})  < \infty$.  It follows from Corollary~\ref{C:linearfunctionalcriteria} that $x$ is not a convex-cyclic vector, a contradiction.  Thus, $T$ must have dense range.
\end{proof}

In general, if $T$ is hypercyclic and $c>1$, then $cT$ may not be hypercyclic. However, Le\'{o}n-Saavedra and M\"{u}ller \cite{MuLe} proved that if $T$ is hypercyclic and $\alpha $ is a unimodular complex number, then $\alpha T$ is hypercyclic. The same property is also true for  weak hypercyclic operators \cite[Theorem 2.8]{Rosa}. Next we present a similar result for convex-cyclic operators, that follows from the Hahn-Banach characterization of convex-cyclic vectors.

\begin{proposition}\label{cT}
If $T$ is a convex-cyclic operator on a real or complex  locally convex space $X$, and if $c > 1$, then $cT$ is also convex-cyclic.  Furthermore, every convex-cyclic vector for $T$ is also a convex-cyclic vector for $cT$.
\end{proposition}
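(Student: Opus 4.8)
The plan is to show that the \emph{same} vector serves as a convex-cyclic vector for both operators, so that the two assertions of the proposition are obtained simultaneously. Thus I would let $x$ be a convex-cyclic vector for $T$ and prove that $x$ is also convex-cyclic for $cT$. The whole argument runs through the Hahn--Banach characterization of Corollary~\ref{C:linearfunctionalcriteria}: applied to $T$, it tells me that for every nonzero continuous linear functional $f$ on $X$ we have $\sup_{n\ge 0} Re(f(T^n x)) = \infty$, while applied to $cT$ it reduces the goal to showing that $\sup_{n\ge 0} Re(f((cT)^n x)) = \infty$ for every such $f$.

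First I would record the elementary identity $(cT)^n x = c^n T^n x$. Since $c>1$ is a real number, $c^n$ is a positive real for every $n$, so $Re(f((cT)^n x)) = c^n\, Re(f(T^n x))$; moreover $c>1$ forces $c^n \ge 1$ for all $n\ge 0$. The key (and essentially only) step is then a sign-aware comparison: because $\sup_{n} Re(f(T^n x)) = \infty$, for each $M>0$ there is an index $n$ with $Re(f(T^n x)) > M > 0$, and for that index, multiplying the positive quantity $Re(f(T^n x))$ by the factor $c^n \ge 1$ can only increase it, giving $Re(f((cT)^n x)) = c^n\, Re(f(T^n x)) \ge Re(f(T^n x)) > M$. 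As $M$ is arbitrary, $\sup_{n} Re(f((cT)^n x)) = \infty$, and Corollary~\ref{C:linearfunctionalcriteria} then certifies $x$ as a convex-cyclic vector for $cT$.

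I do not expect a genuine obstacle: the argument is short once the characterization is invoked. The one point that deserves attention is that the factor $c^n$ also amplifies any negative values of $Re(f(T^n x))$, so one cannot argue by comparing the two sequences term-by-term or by passing to limits. This causes no difficulty precisely because Corollary~\ref{C:linearfunctionalcriteria} asks only that the \emph{supremum} be infinite --- equivalently, condition \textit{(4)} of Proposition~\ref{P:linearfunctionals} --- so it suffices to control the positive excursions of $Re(f(T^n x))$, and on those the inequality $c^n\ge 1$ works entirely in our favor. It is worth noting that the hypothesis $c>1$ is used only through $c^n\ge 1$; the same reasoning shows more generally that the conclusion persists as long as the scaling keeps the relevant terms positive and nondecreasing.
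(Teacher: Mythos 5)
Your proposal is correct and follows essentially the same route as the paper: both fix a convex-cyclic vector $x$ for $T$ and apply the Hahn--Banach characterization (Corollary~\ref{C:linearfunctionalcriteria}), using $Re(f((cT)^nx)) = c^n Re(f(T^nx))$ together with $c^n \ge 1$ to conclude the supremum is still $\infty$. Your extra remark about only needing to control the positive excursions is a careful articulation of a point the paper glosses over in its one-line inequality of suprema, but the argument is the same.
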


\begin{proof}
Suppose that $x$ is a convex-cyclic vector for $T$, and we will show that $x$ is also a convex-cyclic vector for $cT$, by using the Hahn-Banach characterization (Corollary~\ref{C:linearfunctionalcriteria}).  Let $f$ be any non-zero continuous linear functional on $X$.  Since $x$ is a convex-cyclic vector for $T$, then $\sup Re(f(T^nx)) = \infty.$  Since $c > 1$, then we have that $\sup Re[f((cT)^nx)] = \sup c^n Re[f(T^nx)] \geq  \sup Re[f(T^nx)] = \infty$.  So, by the Hahn-Banach characterization, $x$ is a convex-cyclic vector for $cT$.
\end{proof}

\begin{corollary}
If $|c|\geq 1$ and $T$ is weakly hypercyclic, then $cT$ is convex-cyclic.
\end{corollary}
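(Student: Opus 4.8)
The plan is to reduce the statement to two facts already available: the observation from the Introduction that every weakly hypercyclic operator is convex-cyclic (because the weak and norm closures of a convex set coincide in a locally convex space), and Proposition~\ref{cT}, which scales a convex-cyclic operator by a real factor strictly greater than one. The bridge between these, when $c$ is not already real and positive, is the invariance of weak hypercyclicity under multiplication by a unimodular scalar, namely \cite[Theorem 2.8]{Rosa}.

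First I would split off the modulus by writing $c = |c|\,\alpha$, where $\alpha = c/|c|$ is unimodular. Since $T$ is weakly hypercyclic, \cite[Theorem 2.8]{Rosa} shows that $\alpha T$ is again weakly hypercyclic, and hence convex-cyclic by the remark in the Introduction. It then remains only to pass from $\alpha T$ to $cT = |c|\,(\alpha T)$. If $|c| = 1$ there is nothing further to prove, as $cT = \alpha T$ is already convex-cyclic. If $|c| > 1$, then $|c|$ is a real number strictly greater than one, so Proposition~\ref{cT} applies to the convex-cyclic operator $\alpha T$ and yields that $|c|\,(\alpha T) = cT$ is convex-cyclic.

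The only step requiring any genuine input is the appeal to \cite[Theorem 2.8]{Rosa}; once the unimodular factor $\alpha$ has been separated from the positive factor $|c|$, the rest is bookkeeping. I expect this rotation result to be the main obstacle in the sense that it carries the real content: producing a weakly hypercyclic (equivalently, convex-cyclic) vector for $\alpha T$ out of one for $T$ is precisely what the unimodular-scaling theorem supplies, and it is not something that follows trivially from the Hahn-Banach characterization of Corollary~\ref{C:linearfunctionalcriteria} applied to $T$ alone.
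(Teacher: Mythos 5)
Your proof is correct and follows essentially the same route as the paper: factor $c$ into a unimodular part and a modulus $\geq 1$, invoke de la Rosa's rotation theorem to get weak hypercyclicity (hence convex-cyclicity) of the rotated operator, then apply Proposition~\ref{cT} to absorb the modulus. Your explicit case-split at $|c|=1$ is in fact slightly more careful than the paper, since Proposition~\ref{cT} is stated only for scalars strictly greater than one.
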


\begin{proof}
Let $c:= e^{i\theta } \beta $, where $\theta \in \RR$ and $\beta \geq 1$. Then by  de la Rosa \cite[Theorem 2.8]{Rosa} we obtain that  $e^{i\theta }T $ is weakly hypercyclic, hence $e^{i\theta }T $ is convex-cyclic.  Thus, $cT = \beta (e^{i\theta}T)$ is convex cyclic by Proposition \ref{cT}.
\end{proof}

 Let us define the following convex polynomials
$$
p^c_k(t):= \left\{
\begin{array}{ll}
\displaystyle \frac{1+t+\cdots +t^{k-1}}{k} & \mbox{ if } c=1\\[1pc]
\displaystyle\frac{c-1}{c^k-1} (c^{k-1}+c^{k-2}t+ \cdots + t^{k-1})& \mbox{ if } c>1 \;.
\end{array}
\right.
$$

\begin{definition}
{\rm Let $X$ and $Y$ be topological spaces.  A family of continuous operators $T_i:X \rightarrow Y$ $(i\in I)$ is  \emph{universal} if there exists an $x\in X$ such that $\{ T_ix: i\in I\}$ is dense in $Y$.
}
\end{definition}

Let  $T\in L(X)$. Denotes $M_n(T)$ the \emph{arithmetic means} given by
$$
M_n(T):= \frac{I+T+ \cdots +T^{n-1}}{n}\;.
$$
Recall that an operator $T$ is  \emph{Ces\`{a}ro hypercyclic} if there exists $x\in X$ such that $\{ M_n(T)x\;\; : \;\; n\in \mathbb{N}\}$ is dense in $X$. See \cite{fernandoleon}.

In \cite[Theorem 2.4]{fernandoleon} it is proved that $T$ is Ces\'{a}ro hypercyclic if and only if $\left(\displaystyle \frac{T^k}{k}\right)_{k=1}^\infty$ is universal.

\begin{proposition}\label{cesaro-convex}
Let $X$ be a Banach space, $c >1$ and $T\in L(X)$ such that $cI-T$ has  dense range. Then the following are equivalent:
\begin{enumerate}
\item  $\displaystyle \frac{T}{c}$ is hypercyclic
  \item $\left( p_k^c(T)\right)_{k\in \mathbb{N}} $ is universal.
\end{enumerate}
\end{proposition}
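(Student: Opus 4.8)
The plan is to transfer everything to the operator $S:=T/c$ and to the partial-sum operators $\Sigma_k:=I+S+\cdots+S^{k-1}$, which are essentially the $p^c_k(T)$. Writing $T^j=c^jS^j$ one checks the two clean identities $p^c_k(T)=\lambda_k\Sigma_k$ with $\lambda_k=\frac{(c-1)c^{k-1}}{c^k-1}\to\frac{c-1}{c}$, and $\Sigma_k(I-S)=I-S^k$, so that $\frac1c\Sigma_k(cI-T)=I-S^k$. First I would record a \emph{scaling lemma}: if $(\mu_k)$ is a scalar sequence with $\mu_k\to\mu\neq0$, then a sequence of operators $(C_k)$ is universal if and only if $(\mu_kC_k)$ is. The only subtlety is that for a universal vector $u$ the set $\{k:C_ku\in B\}$ is in fact infinite for every nonempty open ball $B$ (a dense subset of a nontrivial normed space meets $B$ in an infinite set, since such a space has no isolated points), which lets one pass to large $k$ where $\mu_k\approx\mu$. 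Because $\lambda_k\to\frac{c-1}{c}\neq0$, this lemma reduces condition (2) to the statement that $(\Sigma_k)_k$ is universal.

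Next I would establish a \emph{translation characterization} of the hypercyclic vectors of $S$. From $\frac1c\Sigma_k(cI-T)=I-S^k$ we obtain, for every $z\in X$ and $k\ge1$, the identity $S^kz=z-\frac1c\Sigma_k\big((cI-T)z\big)$. Since $v\mapsto z-\frac1c v$ is a homeomorphism of $X$, the orbit $\{S^kz:k\ge1\}$ is dense exactly when $\{\Sigma_k w':k\ge1\}$ is dense, where $w':=(cI-T)z$. Hence $z$ is a hypercyclic vector for $S$ if and only if $(cI-T)z$ is a universal vector for $(\Sigma_k)$. In particular (1) immediately gives that $(\Sigma_k)$, and therefore $(p^c_k(T))$, has a universal vector, so $(1)\Rightarrow(2)$ needs no hypothesis on the range.

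The main obstacle is the converse: from the mere existence of a universal vector for $(\Sigma_k)$ I must produce one lying in the range of $cI-T$, because the characterization above only recognizes hypercyclic vectors of $S$ among images $(cI-T)z$. This is exactly where the hypothesis that $cI-T$ has dense range enters, through the following \emph{invariance lemma}. Let $U_\Sigma$ denote the set of universal vectors of $(\Sigma_k)$, and let $C\in L(X)$ have dense range and commute with every $\Sigma_k$ (for example $C=cI-T$, a polynomial in $T$). If $w\in U_\Sigma$, then $\Sigma_k(Cw)=C(\Sigma_kw)$, and since $D:=\{\Sigma_kw:k\}$ is dense and $C$ is continuous with dense range, its image $C(D)$ is again dense in $X$ (for any dense $D$ one has $\overline{C(D)}\supseteq\overline{C(\overline D)}=\overline{C(X)}=X$). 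Thus $Cw\in U_\Sigma$; that is, $U_\Sigma$ is invariant under $cI-T$.

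Finally I would assemble the pieces. Assume (2); by the scaling lemma $U_\Sigma\neq\emptyset$, so fix $w\in U_\Sigma$. The invariance lemma gives $(cI-T)w\in U_\Sigma$, and then the translation characterization applied with $z=w$ shows that $w$ is a hypercyclic vector for $S=T/c$, proving $(2)\Rightarrow(1)$. Together with the easy implication this yields the equivalence. I expect the remaining points to be pure bookkeeping: verifying that density forces infinitely many returns in the scaling lemma, and that the empty-sum and index conventions for $\Sigma_k$ (together with the harmless omission of the $k=0$ term) do not affect any density statement.
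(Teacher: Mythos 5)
Your proof is correct and follows essentially the same route as the paper, whose entire proof consists of the single identity $p_k^c(T)(cI-T)x=(c-1)\frac{c^k}{c^k-1}\bigl(x-(T/c)^kx\bigr)$ — exactly your $\tfrac1c\Sigma_k(cI-T)=I-S^k$ up to the nonvanishing scalar $\lambda_k$. The supporting steps you make explicit (the scaling lemma, the infinitely-many-returns observation, and the invariance of the universal vectors under the dense-range operator $cI-T$) are precisely the details the paper leaves to the reader.
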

\begin{proof}
Notice that if $c>1$,
$$
p_k^c(T)(cI-T)x=(cI-T)p_k^c(T)x=
(c-1)\frac{c^k}{c^k-1} \left(x-\left(\frac{T}{c}\right)^kx\right) \; .
$$
\end{proof}

\begin{proposition} \label{cesaro-hypercyclic}
If $T$ is Ces\`{a}ro hypercyclic or $\displaystyle \frac{T}{c}$  is hypercyclic for some $c\geq 1$, then $T$ is convex-cyclic.
\end{proposition}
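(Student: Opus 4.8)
The plan is to observe that each of the two hypotheses produces a vector $x$ together with a sequence of \emph{convex} polynomials $q_k$ for which $\{q_k(T)x\}$ is dense in $X$; since $q_k(T)x\in co(Orb(T,x))$, this at once makes $co(Orb(T,x))$ dense, so $x$ is a convex-cyclic vector. I would treat the two hypotheses in turn, and in both cases exhibit the relevant convex polynomials explicitly.

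Suppose first that $T$ is Ces\`{a}ro hypercyclic, with $\{M_n(T)x:n\in\mathbb{N}\}$ dense in $X$. The point is that the arithmetic means are themselves convex polynomials of $T$: directly from the definitions one has $M_n(T)=p_n^1(T)$, and $p_n^1$ is a convex polynomial because its coefficients all equal $1/n\geq 0$ and sum to $1$. Hence $M_n(T)x\in co(Orb(T,x))$ for every $n$, and the density of the means forces $co(Orb(T,x))$ to be dense. Thus $x$ is a convex-cyclic vector for $T$.

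Suppose next that $S:=T/c$ is hypercyclic for some $c\geq 1$, and let $x$ be a hypercyclic vector for $S$. Here I would argue directly with the Hahn-Banach characterization (Corollary~\ref{C:linearfunctionalcriteria}). Fix a nonzero continuous linear functional $f$ and a real number $M>0$. The set $\{y\in X:Re(f(y))>M\}$ is open and nonempty, so by density of the orbit of $x$ under $S$ there is an $n$ with $Re(f(S^nx))>M$. Since $T^nx=c^nS^nx$ and $c^n\geq 1$, this gives $Re(f(T^nx))=c^nRe(f(S^nx))\geq Re(f(S^nx))>M$. As $M>0$ was arbitrary, $\sup Re(f(Orb(T,x)))=\infty$, and Corollary~\ref{C:linearfunctionalcriteria} then shows that $x$ is a convex-cyclic vector for $T$. (Alternatively, for $c>1$ one could route through Proposition~\ref{cesaro-convex}: hypercyclicity of $S$ gives $\sigma_p(S^*)=\emptyset$, hence $1\notin\sigma_p(S^*)$, so there is no nonzero $g\in X^*$ with $(cI-T)^*g=0$ and therefore $cI-T=c(I-S)$ has dense range; Proposition~\ref{cesaro-convex} then makes $(p_k^c(T))_k$ universal, and each $p_k^c$ being a convex polynomial finishes the argument as above.)

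I do not expect a serious obstacle: once one notices that both the Ces\`{a}ro means and the families $p_k^c$ consist of convex polynomials, convex-cyclicity reduces to density statements already contained in the hypotheses. The only step needing a little care is the functional estimate in the hypercyclic case, where the inequality $c^n\geq 1$ (valid precisely because $c\geq 1$) is what transfers the unboundedness of $Re(f(S^nx))$ to $Re(f(T^nx))$; this is also exactly the place where the argument would break down for $c<1$, which is why the hypothesis $c\geq 1$ is imposed.
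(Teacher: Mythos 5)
Your proof is correct. The Ces\`{a}ro half coincides with what the paper intends: $M_n(T)=p_n^1(T)$ is a convex polynomial in $T$, so the Ces\`{a}ro means already live in $co(Orb(T,x))$ and density is immediate. For the second half your primary route differs from the one the paper sets up. The paper states this proposition immediately after Proposition~\ref{cesaro-convex}, whose identity $p_k^c(T)(cI-T)x=(c-1)\tfrac{c^k}{c^k-1}\bigl(x-(T/c)^kx\bigr)$ is meant to show that $(p_k^c(T))_k$ is universal with universal vector $(cI-T)x$ whenever $T/c$ is hypercyclic; since each $p_k^c$ is a convex polynomial, $(cI-T)x$ is then a convex-cyclic vector for $T$. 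You instead verify the Hahn--Banach criterion (Corollary~\ref{C:linearfunctionalcriteria}) directly for the hypercyclic vector $x$ of $S=T/c$, using $Re(f(T^nx))=c^nRe(f(S^nx))$ with $c^n\ge 1$ --- in effect the same scaling argument as Proposition~\ref{cT} applied to the convex-cyclic (indeed hypercyclic) operator $T/c$. Both routes are valid; yours has the small advantages of identifying the hypercyclic vector itself as a convex-cyclic vector and of not needing the dense-range hypothesis of Proposition~\ref{cesaro-convex}, and you correctly note in your parenthetical that if one does go through that proposition, the dense range of $cI-T$ must be checked, which follows from $\sigma_p((T/c)^*)=\emptyset$.
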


Notice that the proof of the sufficient condition for a bilateral weighted backward shift on $\ell^p(\mathbb{Z})$ to be convex-cyclic given in  \cite[Theorem 4.2]{Rezaei} is not  correct.
%In fact, an analysis of its proof together with Proposition \ref{cesaro-convex}  and Proposition~\ref{cesaro-hypercyclic} we get a proper proof of his result.

\section{Convex-cyclic operators whose squares are not convex-cyclic}

 As noted in the Introduction, powers of hypercyclic and weakly hypercyclic  operators remain hypercyclic and weakly hypercyclic, respectively. In this section, we give an example  of a convex-cyclic operator  $S$ with $\sigma_{p}(S^*)=\emptyset$ such that $S^2$ is not convex-cyclic. Moreover, the same example gives an operator that is convex-cyclic with $\sigma_{p}(S^*)=\emptyset$  that is not weakly hypercyclic.

Recall that Le\'{o}n-Saavedra and Romero de la Rosa \cite{LeRo} provide an example of a convex-cyclic operator $S$ with $\sigma_p(S^*)\neq \varnothing$  such that $S^n$ fails to be convex-cyclic.  Also, a $2 \times 2$  diagonal matrix $D$ with eigenvalues $2i$ and $-2i$ is convex-cyclic, but $D^2$ has a real eigenvalue and thus is not convex-cyclic.

\begin{theorem}\label{theo1}\cite{Grivaux} Let $T$ be a hypercyclic operator on an infinite dimensional  separable Banach space.
The following assertions are equivalent:
\begin{enumerate}
\item $T\oplus T$ is hypercyclic.

\item $T\oplus T$ is cyclic.

\end{enumerate}

\end{theorem}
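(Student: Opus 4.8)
The implication $(1)\Rightarrow(2)$ is immediate, since every hypercyclic operator is cyclic (a dense orbit has dense linear span), so all the content is in $(2)\Rightarrow(1)$. The natural first move is to translate both sides into standard language: by the definition given in the Introduction, $T\oplus T$ is hypercyclic precisely when $T$ is weakly-mixing, and by the B\`es--Peris characterization this is in turn equivalent to $T$ satisfying the Hypercyclicity Criterion, equivalently to the topological transitivity of $T\oplus T$. Thus the plan is to assume that $T$ is hypercyclic and that $T\oplus T$ is cyclic, and to deduce that $T\oplus T$ is topologically transitive. Concretely, by the three--open--set form of the criterion it suffices to produce, for all nonempty open $U,V\subseteq X$ and every neighbourhood $W$ of $0$, an integer $n$ with
$$ T^{n}(U)\cap W\neq\emptyset \quad\text{and}\quad T^{n}(W)\cap V\neq\emptyset. $$

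Next I would unpack the hypothesis. Fix a cyclic vector $(a,b)$ for $T\oplus T$, so that
$$ \overline{\{(p(T)a,\,p(T)b): p \text{ is a polynomial}\}}=X\oplus X. $$
Projecting onto each coordinate shows that $a$ and $b$ are separately cyclic vectors for $T$, but the joint statement is stronger: one may approximate an arbitrary pair $(\xi,\eta)\in X\times X$ simultaneously by $(p(T)a,p(T)b)$ for a single polynomial $p$. In particular, by taking targets of the form $(\xi,0)$ and $(0,\eta)$, I can find polynomials that are large in one coordinate while nearly annihilating the cyclic vector in the other. I would also record that, since $T$ is hypercyclic, $\sigma_p(T^*)=\emptyset$ and the hypercyclic vectors of $T$ form a dense $G_\delta$; the vanishing of $\sigma_p(T^*)$ is exactly the purely spectral obstruction that cyclicity of $T\oplus T$ already forces, which is why the argument cannot be spectral and must be genuinely dynamical.

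The heart of the proof, and the step I expect to be the main obstacle, is to convert the density of the \emph{linear span} of the orbit of $(a,b)$ (which is all that cyclicity provides) into density of an actual orbit, that is, to manufacture a common iterate $n$ that works simultaneously in both coordinates of the three--set condition. The right engine is a Baire--category construction: using the dense $G_\delta$ of hypercyclic vectors of $T$ together with the simultaneous approximations extracted above, one builds the required return time $n$ and nearby points realizing $T^{n}(U)\cap W\neq\emptyset$ and $T^{n}(W)\cap V\neq\emptyset$. The subtlety is precisely the synchronisation of the two coordinates: hypercyclicity of $T$ by itself is not enough, since by the examples of de la Rosa--Read there exist hypercyclic operators that are not weakly-mixing, and for those $T\oplus T$ is therefore not even cyclic. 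Hence cyclicity of $T\oplus T$ must be fed in exactly at this synchronisation step, forcing the return sets $\{n: T^n(U)\cap W\neq\emptyset\}$ and $\{n: T^n(W)\cap V\neq\emptyset\}$ to meet. Carrying out this construction is the technical core, and for it I would follow Grivaux's argument.
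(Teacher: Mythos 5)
First, a point of comparison: the paper does not prove this statement at all --- Theorem \ref{theo1} is imported verbatim from \cite{Grivaux} and used as a black box in Corollary \ref{3.1}. So there is no in-paper argument to measure your proposal against; the only question is whether your proposal stands on its own as a proof. It does not, although it is a correct and well-oriented roadmap. The implication $(1)\Rightarrow(2)$ is indeed trivial, your reduction of $(2)\Rightarrow(1)$ via the B\`es--Peris characterization (weak mixing $\Leftrightarrow$ Hypercyclicity Criterion $\Leftrightarrow$ the three-open-set condition) is the right frame, and your preliminary observations are all accurate: a cyclic vector $(a,b)$ for $T\oplus T$ gives simultaneous polynomial approximation in both coordinates; cyclicity of $T\oplus T$ forces $\sigma_p(T^*)=\varnothing$ because eigenvalues of $(T\oplus T)^*$ have multiplicity at least two, so no spectral shortcut is available; and the de la Rosa--Read examples show the hypothesis of cyclicity of the sum cannot be dropped.

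The genuine gap is that the one step carrying all the content of the theorem --- deducing, from density of the \emph{linear span} of $\{(T\oplus T)^n(a,b)\}$, the existence of a single power $n$ with $T^{n}(U)\cap W\neq\varnothing$ and $T^{n}(W)\cap V\neq\varnothing$ --- is announced but not performed; you write that you ``would follow Grivaux's argument'' precisely there. Identifying the synchronisation of the two coordinates as the obstacle is correct, but naming an obstacle is not the same as overcoming it: nothing in your text explains how a polynomial $p$ with $p(T)a$ and $p(T)b$ in prescribed positions gets converted into a common \emph{monomial} return time, which is where Grivaux's actual work lies (a delicate interplay between the cyclic vector for the sum and the dense $G_\delta$ of hypercyclic vectors for $T$). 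As written, your argument proves exactly what the paper's citation proves, namely nothing beyond the trivial direction. If the theorem is to be treated as quotable background, the proposal is a fair gloss on why it is plausible and where its difficulty sits; if it is to be a proof, the core construction must be supplied.
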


\begin{theorem}(\cite[Proposition 2.3]{Bayart-Costakis} \& \cite[Corollary 5.2]{shkarin2}) Let $T$ be a hypercyclic operator on a separable Banach space. Then $T\oplus -T$ is hypercyclic with support 2 and 1-weakly hypercyclic.
\end{theorem}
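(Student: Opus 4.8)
The plan is to produce a single vector in $X \oplus X$ that witnesses both conclusions simultaneously, namely $(x,x)$, where $x$ is a hypercyclic vector for $T$. The guiding observation is the parity decomposition of the orbit, $(T \oplus -T)^n(x,x) = (T^n x, (-1)^n T^n x)$. The two density facts that drive everything are: first, by Ansari's theorem \cite{Ansari} the hypercyclic vector $x$ for $T$ is also hypercyclic for $T^2$, so the even sub-orbit $\{T^{2n}x : n \geq 0\}$ is dense in $X$; second, since a hypercyclic operator has dense range (Proposition \ref{2.3} applies, as hypercyclic operators are convex-cyclic), continuity of $T$ yields $T(X) \subseteq \overline{T(\{T^{2n}x\})}$, and taking closures gives $X = \overline{T(X)} \subseteq \overline{T(\{T^{2n}x\})}$, so the odd sub-orbit $\{T^{2n+1}x : n \geq 0\}$ is dense as well.

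For hypercyclicity with support $2$, I would fix a target $(a,b) \in X \oplus X$ and $\varepsilon > 0$ and seek an even index $k_1$ and an odd index $k_2$. Writing the sum $(T \oplus -T)^{k_1}(x,x) + (T \oplus -T)^{k_2}(x,x)$ coordinatewise gives $(T^{k_1}x + T^{k_2}x,\, T^{k_1}x - T^{k_2}x)$, which approximates $(a,b)$ within $\varepsilon$ precisely when $T^{k_1}x \approx (a+b)/2$ and $T^{k_2}x \approx (a-b)/2$. These two approximations are achieved independently using the density of the even and odd sub-orbits established above, so $(x,x)$ is hypercyclic with support $2$.

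For $1$-weak hypercyclicity, I would write an arbitrary nonzero continuous functional on $X \oplus X$ as $g(u,v) = f(u) + h(v)$ with $(f,h) \neq (0,0)$, and compute that $g((T \oplus -T)^n(x,x))$ equals $(f+h)(T^n x)$ for even $n$ and $(f-h)(T^n x)$ for odd $n$. Since $(f,h) \neq (0,0)$, at least one of $f+h$ and $f-h$ is nonzero; applying that functional to the corresponding (even or odd) dense sub-orbit produces a dense subset of $\mathbb{C}$, because a nonzero continuous linear functional $\phi$ maps a dense set $D$ to a dense subset of $\mathbb{C}$ (indeed $\mathbb{C} = \phi(X) = \phi(\overline{D}) \subseteq \overline{\phi(D)}$). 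Hence $g(Orb(T \oplus -T, (x,x)))$ is dense in $\mathbb{C}$ for every nonzero $g$, which is exactly $1$-weak hypercyclicity.

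The essential nontrivial input, and the only real obstacle, is the passage from hypercyclicity of $T$ to density of the even-power sub-orbit of the \emph{same} vector $x$; this is precisely Ansari's theorem, and it is what allows the single vector $(x,x)$ to serve for both parts. Everything else reduces to elementary arithmetic over the field together with the continuity and dense-range bookkeeping already recorded above.
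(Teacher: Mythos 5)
Your argument is correct and complete. Note, however, that the paper itself offers no proof of this statement: it is quoted verbatim from the cited sources (\cite[Proposition 2.3]{Bayart-Costakis} and \cite[Corollary 5.2]{shkarin2}), so there is no internal proof to compare against. Your route --- take $(x,x)$ with $x$ hypercyclic for $T$, use Ansari's theorem to get density of the even sub-orbit $\{T^{2n}x\}$, push it forward by the dense-range operator $T$ to get density of the odd sub-orbit, and then exploit the parity identity $(T\oplus -T)^n(x,x)=(T^nx,(-1)^nT^nx)$ --- is exactly the standard mechanism behind both cited results, and every step checks out: the support-$2$ claim reduces to the invertible change of variables $(a,b)\mapsto\bigl(\tfrac{a+b}{2},\tfrac{a-b}{2}\bigr)$ with the two halves approximated independently along even and odd indices, and the $1$-weak claim reduces to the observation that for $g=(f,h)\neq(0,0)$ at least one of $f+h$, $f-h$ is nonzero and a nonzero continuous functional carries a dense set to a dense subset of the scalars. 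You correctly identify Ansari's theorem as the one genuinely nontrivial ingredient; without it the even sub-orbit need not be dense and the whole construction collapses. The only cosmetic remark is that invoking Proposition~\ref{2.3} to get dense range of a hypercyclic operator is a slight detour (the orbit minus its first point already lies in $R(T)$ and is dense), but it is perfectly valid.
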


\begin{corollary}\label{3.1}
If $T$ is a hypercyclic operator  on an infinite dimensional  Banach space such that $T\oplus T$ is not hypercyclic, then $T\oplus -T$ is  convex-cyclic, but not weakly hypercyclic  and $(T\oplus -T)^2$ is not cyclic.
\end{corollary}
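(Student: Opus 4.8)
The plan is to prove the three assertions in turn, leaning on the two results quoted immediately above the statement together with the Remark on operators that are hypercyclic with support $N$. Note first that since $T$ is hypercyclic the underlying space is automatically separable, so the quoted theorem of Bayart--Costakis and Shkarin applies verbatim even though the corollary only assumes a Banach space.

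Convex-cyclicity is essentially immediate. By the quoted theorem, $T\oplus -T$ is hypercyclic with support $2$, and by the Remark in the Introduction every operator that is hypercyclic with support $N$ is convex-cyclic. Hence $T\oplus -T$ is convex-cyclic (and in particular has dense range, consistently with Proposition \ref{2.3}).

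I would prove the failure of cyclicity of the square next, since the weak-hypercyclicity claim will be reduced to it. The key algebraic observation is the identity
$$(T\oplus -T)^2 = T^2\oplus T^2 = (T\oplus T)^2 .$$
Combine this with the elementary fact that a cyclic vector for $S^2$ is automatically a cyclic vector for $S$: since $\{S^{2n}v\}\subseteq\{S^{m}v\}$, density of the linear span of the even orbit forces density of the span of the whole orbit. Applying this with $S=T\oplus T$, if $(T\oplus -T)^2=(T\oplus T)^2$ were cyclic then $T\oplus T$ would be cyclic, and by Grivaux's dichotomy (Theorem \ref{theo1}) a cyclic $T\oplus T$ with $T$ hypercyclic must be hypercyclic, contradicting the hypothesis. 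Thus $(T\oplus -T)^2$ is not cyclic.

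Finally I would deduce that $T\oplus -T$ is not weakly hypercyclic, and this is where I expect the real difficulty to lie. The strategy is to pass to the square: if $T\oplus -T$ were weakly hypercyclic, then so would be its square $(T\oplus -T)^2$, and a weakly hypercyclic operator is in particular cyclic (the span of a weakly dense orbit is weakly dense, and the weak and norm closures of a subspace coincide). This would make $(T\oplus -T)^2$ cyclic, contradicting the previous step, so $T\oplus -T$ cannot be weakly hypercyclic; this exhibits the gap between $1$-weak hypercyclicity (which $T\oplus -T$ has) and weak hypercyclicity. The main obstacle is precisely the input that \emph{weak hypercyclicity is inherited by powers}, the analogue of Ansari's theorem for the weak topology: unlike the norm case, the weak topology is not metrizable on an infinite-dimensional space, so the Baire-category and connectedness arguments underlying Ansari's theorem do not transfer directly, and one must invoke the (nontrivial) weak-topology version from the literature. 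Everything else — the support-$2$ conclusion, the identity $(T\oplus -T)^2=(T\oplus T)^2$, the descent of cyclicity from the square, and Grivaux's dichotomy — is routine once those cited results are available.
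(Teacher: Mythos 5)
Your proof is correct and follows essentially the same route as the paper's: convex-cyclicity via the support-$2$ theorem and the Remark, non-cyclicity of the square via Grivaux's dichotomy together with the identity $(T\oplus -T)^2=(T\oplus T)^2$ and the descent of cyclicity from $S^2$ to $S$, and failure of weak hypercyclicity by passing to the square and invoking that powers of weakly hypercyclic operators are weakly hypercyclic (the paper cites \cite[Theorem 2.4]{BoFe} for this). Your remark that hypercyclicity forces separability, so the quoted Bayart--Costakis/Shkarin and Grivaux results apply, is a correct and worthwhile point the paper leaves implicit.
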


\begin{proof}
Suppose that $T$ is a hypercyclic operator such that $T\oplus T$ is not hypercyclic.  Then by Theorem \ref{theo1}, $T\oplus T$ is not cyclic. Thus $(T\oplus T)^2$ is not cyclic, hence   $(T\oplus -T)^2=(T\oplus T)^2$ is  not cyclic. It follows that  $T\oplus -T$ is not weakly hypercyclic, for if it was, then $(T \oplus -T)^2 = (T \oplus T)^2$ would be weakly hypercyclic, and hence cyclic, a contradiction.  Thus $T \oplus -T$ is convex-cyclic but not weakly hypercyclic, and $(T \oplus -T)^2$ is not cyclic.
\end{proof}

Examples of operators satisfying that $T$ is hypercyclic but $T \oplus T$ is not hypercyclic are given in \cite{Rosa-Read}, \cite[Corollary 4.15]{Bayart-Matheron2} and \cite{Bayart-Matheron1}.  Using these examples we have the following result.

\begin{theorem} There exists an  operator $S$ on $c_{0}(\Bbb N)\oplus c_{0}(\Bbb N)$ or on
$\ell^p(\Bbb N)\oplus \ell^p(\Bbb N)$ with $p\geq 1$ that is  convex-cyclic,  but not weakly hypercyclic, and $S^2$ is not convex-cyclic.
\end{theorem}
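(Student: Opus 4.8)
The plan is to take $S := T \oplus -T$ for a suitably chosen hypercyclic operator $T$ and to invoke Corollary~\ref{3.1}, so that essentially all the substantive reasoning has already been carried out; what remains is to supply the right $T$ and to convert "not cyclic" into "not convex-cyclic."

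First I would recall that the purpose of Corollary~\ref{3.1} is precisely to manufacture a convex-cyclic operator that is not weakly hypercyclic, starting from a hypercyclic $T$ whose square $T \oplus T$ fails to be hypercyclic. Thus the first step is to produce such a $T$ on the required space. The existence of a hypercyclic operator $T$ for which $T \oplus T$ is not hypercyclic is exactly the content of the de la Rosa--Read example \cite{Rosa-Read} and its refinements by Bayart and Matheron \cite[Corollary 4.15]{Bayart-Matheron2}, \cite{Bayart-Matheron1}; these constructions are realized on $c_0(\mathbb{N})$ and on $\ell^p(\mathbb{N})$ for $p \geq 1$. I would fix one such operator $T$ on $c_0(\mathbb{N})$ (respectively $\ell^p(\mathbb{N})$), so that $S = T \oplus -T$ becomes a bounded operator on $c_0(\mathbb{N}) \oplus c_0(\mathbb{N})$ (respectively $\ell^p(\mathbb{N}) \oplus \ell^p(\mathbb{N})$), matching the space in the statement.

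Next, since $T$ is hypercyclic and $T \oplus T$ is not hypercyclic, Corollary~\ref{3.1} applies verbatim and delivers three conclusions simultaneously: $S$ is convex-cyclic, $S$ is not weakly hypercyclic, and $(T \oplus -T)^2$ is not cyclic. The only remaining gap is to upgrade "$S^2$ not cyclic" to "$S^2$ not convex-cyclic." For this I would simply appeal to the elementary implication, noted in the Introduction, that every convex-cyclic operator is cyclic; contrapositively, because $S^2 = (T \oplus -T)^2$ is not cyclic, $S^2$ cannot be convex-cyclic. I would also observe that, since $T$ is hypercyclic, $\sigma_p(T^*) = \emptyset$, whence $\sigma_p(S^*) = \sigma_p(T^*) \cup \sigma_p((-T)^*) = \emptyset$, which is the feature needed to address Questions~\ref{weakly-hypercyclic} and~\ref{potencias} in the case $\sigma_p(S^*) = \varnothing$.

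I do not expect any genuine obstacle in assembling this argument, since Corollary~\ref{3.1} already performs the nontrivial deductions and the conversion to \emph{convex}-cyclicity is immediate. The real difficulty is entirely displaced into the cited construction of a hypercyclic operator that is not weakly mixing --- the deep ingredient that makes the whole example possible --- so the work here is one of careful citation and bookkeeping rather than new estimates.
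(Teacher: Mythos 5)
Your proposal is correct and follows exactly the paper's (implicit) argument: the paper also obtains this theorem by feeding the de la Rosa--Read and Bayart--Matheron examples of a hypercyclic $T$ with $T\oplus T$ not hypercyclic into Corollary~\ref{3.1}, and the upgrade from ``$S^2$ not cyclic'' to ``$S^2$ not convex-cyclic'' via the implication convex-cyclic $\Rightarrow$ cyclic is precisely what is intended. Nothing is missing.
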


Using similar ideas of   Shkarin \cite[Lemma 6.5]{shkarin1} we obtain the following result.

\begin{theorem}
Let  $T \in L(X)$. If $T^2$ is convex-cyclic, then $T\oplus -T$ is convex-cyclic.
\end{theorem}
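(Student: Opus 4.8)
The plan is to use the Hahn-Banach characterization of convex-cyclic vectors (Corollary~\ref{C:linearfunctionalcriteria}) together with an explicit guess for a convex-cyclic vector of $T\oplus -T$ read off from one for $T^2$. Let $x$ be a convex-cyclic vector for $T^2$, so that for every nonzero $h\in X^*$ one has $\sup_k Re(h(T^{2k}x))=\infty$. I claim that $(x,x)$ is a convex-cyclic vector for $T\oplus -T$ on $X\oplus X$, and the whole argument amounts to verifying the single functional condition of Corollary~\ref{C:linearfunctionalcriteria} for this vector.

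First I would record the one structural fact about $T$ that the proof needs. Since $T^2$ is convex-cyclic, Proposition~\ref{2.3} gives that $T^2$ has dense range, and because $R(T^2)\subseteq R(T)$ this forces $R(T)$ to be dense as well; hence $T^*$ is injective. This injectivity is exactly what rescues the degenerate case below and is the point at which the hypothesis is genuinely exploited.

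Next I would apply Corollary~\ref{C:linearfunctionalcriteria} to $T\oplus -T$ and $(x,x)$. A nonzero continuous linear functional on $X\oplus X$ has the form $F(a,b)=f(a)+g(b)$ with $(f,g)\neq(0,0)$. Since $(T\oplus -T)^n(x,x)=(T^n x,(-1)^n T^n x)$, evaluating along the orbit gives $F((T\oplus -T)^n(x,x))=f(T^n x)+(-1)^n g(T^n x)$, which equals $(f+g)(T^{2k}x)$ on the even terms $n=2k$ and $(T^*(f-g))(T^{2k}x)$ on the odd terms $n=2k+1$ (using $T^{2k+1}x=T(T^{2k}x)$). It then suffices to exhibit one subsequence on which $Re(F(\cdot))$ is unbounded above, and I would split into two cases. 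If $f+g\neq 0$, applying convex-cyclicity of $x$ for $T^2$ to $h=f+g$ makes the real parts of the even terms unbounded above. If $f+g=0$, then $g=-f$ with $f\neq 0$, so the even terms vanish identically, but $T^*(f-g)=2T^*f\neq 0$ by injectivity of $T^*$; applying convex-cyclicity of $x$ for $T^2$ to $h=2T^*f$ makes the real parts of the odd terms unbounded above. In either case $\sup_n Re(F((T\oplus -T)^n(x,x)))=\infty$, so Corollary~\ref{C:linearfunctionalcriteria} yields that $(x,x)$ is convex-cyclic for $T\oplus -T$.

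I expect the only real subtlety to be the case $f+g=0$: there the even-indexed part of the orbit carries no information about $F$, and one must extract the required unboundedness from the odd-indexed part. That step collapses unless $T^*f\neq 0$, which is precisely why establishing at the outset that $T$ has dense range, equivalently that $T^*$ is injective, is the crux of the argument.
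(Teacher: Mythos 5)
Your argument is correct, but it is genuinely different from the one in the paper. The paper works on the ``vector side'': from $p_k(T^2)x\to y$ it deduces that the closed convex hull of $Orb(T\oplus -T,(x,x))$ contains the symmetric subspace $L_0=\{(u,u)\}$ and (using the dense range of $T$) the antisymmetric subspace $L_1=\{(u,-u)\}$, and then hits an arbitrary $(y,z)$ with the explicit convex polynomials $p_k(t)=\tfrac12 q_k(t^2)+\tfrac{t}{2}h_k(t^2)$. You instead work on the ``functional side'' via Corollary~\ref{C:linearfunctionalcriteria}: writing $F(a,b)=f(a)+g(b)$, your even/odd split of the orbit into $(f+g)(T^{2k}x)$ and $(T^*(f-g))(T^{2k}x)$ is precisely the dual of the paper's $L_0/L_1$ decomposition, and your appeal to injectivity of $T^*$ (from $R(T^2)\subseteq R(T)$ dense, via Proposition~\ref{2.3}) plays exactly the role that dense range of $T$ plays in the paper's approximation of $(y-z,z-y)$. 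Your case analysis is complete: when $f+g\neq 0$ the even terms already give $\sup Re=\infty$, and when $f+g=0$ the functional $2T^*f$ is nonzero so the odd terms do. What the paper's route buys is an explicit convex-combination witness and the structural fact about $L_0$ and $L_1$ sitting inside the closed convex hull of the orbit; what your route buys is brevity and the elimination of the (minor but real) obligation to check that $\tfrac12 q_k(t^2)+\tfrac{t}{2}h_k(t^2)$ is again a convex polynomial. Both proofs identify the same crux, namely that $T$ has dense range.
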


\begin{proof}
Let $x$ be a convex-cyclic vector  for $T^2$ and let $S:=T\oplus -T$. Then for all $y\in X$ there exists a sequence $(p_k)$ of convex polynomials such that $p_k(T^2)x$ converges to $y$ as $k$ tends to infinity.
Thus
$$
p_k(S^2) (x,x) \to (y,y)\; ,
$$
and
$$
Sp_k(S^2)(x,x)\to (Ty,-Ty) \;.
$$
Since $T^2$ is convex-cyclic, $T$ is convex-cyclic. By Proposition \ref{2.3}, the range of $T$ is dense.
By other hand, $p_k(x^2) $ and $xp_k(x^2)$ are convex polynomials. Thus the closed convex hull of $Orb(S,(x,x))$ contains
 the spaces $L_0:=\{ (u,u) : u\in X\}$ and $L_1:=\{ (u,-u) : u\in X\}$.
 % Since $X\times X=L_0 \oplus L_1$, we obtain that the convex hull  of $Orb(S,(x,x))$ is dense on $X\times X$.
  So, if we are given $(y,z)\in X\times X$, then
let $(q_k)$ and $(h_k)$ be sequences of convex polynomials such that
$$
q_k(S^2)(x,x) \rightarrow (y+z,y+z)
$$
and
$$
Sh_k(S^2)(x,x) \rightarrow (y-z,z-y) \; .
$$
Then $p_k(t):=\frac12 q_k(t^2)+\frac{t}{2}h_k(t^2)$ is a sequence of convex polynomials  and
$$
p_k(S)(x,x) \rightarrow(y,z) \; .
$$
Thus $S = T \oplus -T$ is convex-cyclic.
\end{proof}

%This result give the following corollary.

Ansari \cite{Ansari} proved that an operator $T$ is hypercyclic if and only if $T^n$ is hypercyclic. In fact $T$ and $T^n$ have the same set of hypercyclic vectors for any positive integer $n$.
 This property is also true for weakly hypercyclic  vectors (see \cite[Theorem 2.4]{BoFe}), thus we get the following corollary.

\begin{corollary}
If $T$ is weakly-hypercyclic, then $T\oplus -T$ is convex-cyclic.
\end{corollary}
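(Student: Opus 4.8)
The plan is to reduce everything to the previous theorem, which asserts that convex-cyclicity of $T^2$ already forces $T \oplus -T$ to be convex-cyclic. Thus it suffices to show that the hypothesis ``$T$ is weakly hypercyclic'' yields that $T^2$ is convex-cyclic, and the desired conclusion then follows at once.

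First I would invoke the Ansari-type result for weak hypercyclicity recorded just above, namely the analogue of Ansari's theorem due to Bourdon and Feldman, \cite[Theorem 2.4]{BoFe}: since $T$ and $T^n$ share the same set of weakly hypercyclic vectors, the weak hypercyclicity of $T$ passes to $T^2$, so $T^2$ is weakly hypercyclic. Next I would pass from weak hypercyclicity to convex-cyclicity using the observation made in the Introduction: every weakly hypercyclic operator is convex-cyclic, because in a Banach space the norm closure and the weak closure of a convex set coincide, so a weakly dense orbit automatically has norm-dense convex hull. Applying this to $T^2$ shows that $T^2$ is convex-cyclic, and the previous theorem then immediately gives that $T \oplus -T$ is convex-cyclic.

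The argument is essentially a short chain of previously established facts, so there is no genuine analytic obstacle; the single point doing the real work is the stability of weak hypercyclicity under taking powers, \cite[Theorem 2.4]{BoFe}, which is precisely what supplies a convex-cyclic square to feed into the previous theorem. The implication ``weakly hypercyclic $\Rightarrow$ convex-cyclic'' and the previous theorem then combine mechanically to finish the proof.
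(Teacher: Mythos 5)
Your proof is correct and follows exactly the chain the paper intends: the Ansari-type stability of weak hypercyclicity under powers gives that $T^2$ is weakly hypercyclic, hence convex-cyclic, and the preceding theorem then yields that $T\oplus -T$ is convex-cyclic. This is precisely the argument the paper relies on for this corollary.
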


In the following result we obtain that if  $T$ and $T^n$ are convex-cyclic operators, the set of convex-cyclic vectors could be different.

\begin{proposition}
There are hypercyclic operators such that $T$ and $T^2$  do not  have the same convex-cyclic vectors.
\end{proposition}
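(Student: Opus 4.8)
The plan is to build the example as a ``swap'' of a weakly-mixing operator with itself, which makes the square split as a direct sum and traps the even part of every orbit inside one summand.

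Let $A$ be a weakly-mixing hypercyclic operator on a separable infinite dimensional Banach space $Y$ (for instance $A = 2B$, twice the backward shift on $\ell^2(\mathbb{N})$), and define $S\colon Y\oplus Y \to Y\oplus Y$ by $S(a,b)=(Ab,Aa)$. A direct computation gives $S^2 = A^2\oplus A^2$. First I would check that $S$ is hypercyclic: since $A$ is weakly-mixing, $A\oplus A$ is hypercyclic by definition, so by Ansari's theorem \cite{Ansari} its square $(A\oplus A)^2 = A^2\oplus A^2 = S^2$ is hypercyclic, and applying Ansari's theorem once more (now to $S$) yields that $S$ itself is hypercyclic.

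Next I would fix a hypercyclic vector $u$ for $A$ and record that both $u$ and $Au$ are hypercyclic, hence convex-cyclic, vectors for $A^2$: indeed $Au$ is hypercyclic for $A$ (its $A$-orbit is the $A$-orbit of $u$ with one point deleted), and Ansari's theorem transfers hypercyclicity from $A$ to $A^2$ for both vectors. The key computation is then that the orbit of $(u,0)$ under $S$ decouples: $S^{2k}(u,0) = (A^{2k}u,0)$ while $S^{2k+1}(u,0) = (0,A^{2k+1}u)$, so the even iterates lie in $Y\oplus\{0\}$ and trace the $A^2$-orbit of $u$, and the odd iterates lie in $\{0\}\oplus Y$ and trace the $A^2$-orbit of $Au$.

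Using the Hahn-Banach characterization (Corollary~\ref{C:linearfunctionalcriteria}), I would then show $(u,0)$ is convex-cyclic for $S$. Let $(f_1,f_2)\neq 0$. If $f_2\neq 0$, then the odd iterates give $\sup_k Re(f_2(A^{2k}(Au))) = \infty$ because $Au$ is convex-cyclic for $A^2$; if $f_2=0$ (so $f_1\neq 0$), the even iterates give $\sup_k Re(f_1(A^{2k}u)) = \infty$ because $u$ is convex-cyclic for $A^2$. In either case $\sup Re\big((f_1,f_2)(Orb(S,(u,0)))\big)=\infty$, so $(u,0)$ is convex-cyclic for $S$. On the other hand $(u,0)$ is not convex-cyclic for $S^2 = A^2\oplus A^2$: its $S^2$-orbit stays inside $Y\oplus\{0\}$, so testing against $f=(0,g)$ with $g\neq 0$ gives $Re(f((S^2)^k(u,0)))=0$ for all $k$, whence the supremum is $0\neq\infty$ and the criterion fails. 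Thus $(u,0)$ is convex-cyclic for $S$ but not for $S^2$, proving the proposition. The main obstacle — really the one idea that makes everything work — is choosing the operator so that passing to the square confines the even iterates to a proper invariant subspace while the odd iterates sweep out the complementary summand; the swap form $S(a,b)=(Ab,Aa)$ achieves exactly this, and once it is in place the verification reduces to the Hahn-Banach criterion applied separately to the two summands, with the hypercyclicity of $S$ obtained cleanly through weak mixing and two applications of Ansari's theorem.
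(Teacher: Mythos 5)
Your proof is correct, but it takes a genuinely different route from the paper's. The paper keeps the operator on a single copy of $\ell^2(\mathbb{N})$: it takes $T=2B$ and invokes \cite[Theorem 5.3]{Feldman} to produce a $1$-weakly hypercyclic (hence convex-cyclic) vector $x$ for $T$ whose entire $T^2$-orbit lies in the range of the doubling map $D(x_0,x_1,\ldots)=(x_0,x_0,x_1,x_1,\ldots)$, a proper closed subspace; the convex hull of $Orb(T^2,x)$ is then trapped in that subspace. You instead manufacture the trapping subspace by hand with the swap operator $S(a,b)=(Ab,Aa)$ on $Y\oplus Y$, so that $S^2=A^2\oplus A^2$ confines the $S^2$-orbit of $(u,0)$ to $Y\oplus\{0\}$, and you verify convex-cyclicity of $(u,0)$ for $S$ directly via Corollary~\ref{C:linearfunctionalcriteria} using Ansari's theorem \cite{Ansari} (note that the step ``$S^2$ hypercyclic implies $S$ hypercyclic'' is actually the trivial direction, since $Orb(S^2,x)\subseteq Orb(S,x)$ --- you do not need Ansari there, only for transferring hypercyclicity of $u$ and $Au$ from $A$ to $A^2$). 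What your approach buys is self-containedness: it needs only weak mixing of a standard shift, Ansari, and the Hahn--Banach characterization already proved in Section 2, rather than the deeper structural result from \cite{Feldman}. What the paper's approach buys is a sharper example: the operator is $2B$ itself rather than a two-fold direct sum, and the distinguishing vector is even $1$-weakly hypercyclic for $T$.
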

\begin{proof}
Let $T$ be twice the backward shift, $T:=2B$, on $\ell ^2(\mathbb{N})$ and let $D$ be the doubling map on $\ell^2(\mathbb{N})$, given by $D(x_0, x_1, x_2, \ldots) = (x_0, x_0, x_1, x_1, x_2, x_2, \ldots)$.
By \cite[Theorem 5.3]{Feldman} there exists an $x \in \ell^2(\mathbb{N})$ such that $x$ is a  1-weakly hypercyclic  vector for $T$ (and hence a convex-cyclic vector for $T$) and  $Orb(T^2, x) \subseteq D(\ell^2(\mathbb{N}))$.  Thus,
$$
\overline{co (Orb(T^2, x))}\subseteq \overline{span[ Orb(T^2,x) ] } \subseteq  D(\ell ^2 (\mathbb{N})) \neq \ell^2(\mathbb{N}) \; .
$$
 Since $D(\ell^2(\mathbb{N}))$ is  a proper  closed subspace of $\ell^2(\mathbb{N})$, this complete the proof.
\end{proof}

 \section{m-isometries are not convex-cyclic}

Bayart proved the following spectral result for $m$-isometries on Banach spaces.

\begin{proposition}\label{propo} \cite[Proposition 2.3]{Bayart} Let $T\in L(X)$ be an m-isometry. Then its approximate point spectrum lies in the unit circle. In particular, $T$ is one-to-one, $T$ has closed range and either $\sigma(T) \subseteq \Bbb T$ or $\sigma (T) = \overline {\Bbb D}$.
 \end{proposition}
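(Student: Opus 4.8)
The plan is to wring all of the spectral conclusions out of the single defining identity of an $(m,p)$-isometry by feeding it approximate eigenvectors. Suppose $\lambda \in \sigma_{ap}(T)$ and choose unit vectors $x_j$ with $\|Tx_j - \lambda x_j\| \to 0$. Since $T$ is bounded, the telescoping identity $T^k x_j - \lambda^k x_j = \sum_{i=0}^{k-1}\lambda^{k-1-i}T^i(Tx_j - \lambda x_j)$ shows that $\|T^k x_j - \lambda^k x_j\| \to 0$, and hence $\|T^k x_j\| \to |\lambda|^k$ for each fixed $k$. First I would apply the $(m,p)$-isometry identity $\sum_{k=0}^m (-1)^{m-k}\binom{m}{k}\|T^k x_j\|^p = 0$ and let $j \to \infty$. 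Writing $s := |\lambda|^p$, the binomial theorem gives $\sum_{k=0}^m (-1)^{m-k}\binom{m}{k} s^k = (s-1)^m$, so the limiting identity reads $(|\lambda|^p - 1)^m = 0$. Thus $|\lambda|^p = 1$, i.e. $|\lambda| = 1$, which proves $\sigma_{ap}(T) \subseteq \mathbb{T}$.

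Next I would read off the two elementary consequences. Because $0 \notin \mathbb{T}$, we have $0 \notin \sigma_{ap}(T)$, and an operator omits $0$ from its approximate point spectrum precisely when it is bounded below, i.e. $\|Tx\| \geq c\|x\|$ for some $c > 0$. Being bounded below immediately yields that $T$ is injective and that the range $R(T)$ is closed, which gives the ``one-to-one'' and ``closed range'' assertions.

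For the spectral dichotomy I would invoke the standard fact that $\partial \sigma(T) \subseteq \sigma_{ap}(T)$, so by the first step $\partial \sigma(T) \subseteq \mathbb{T}$. Since any point of $\sigma(T)$ of maximal modulus must lie on $\partial\sigma(T)$ (an interior point has a neighborhood in $\sigma(T)$ containing points of strictly larger modulus), the spectral radius equals $1$ and $\sigma(T) \subseteq \overline{\mathbb{D}}$. Now observe that the open disk $\mathbb{D}$ is connected and disjoint from $\partial\sigma(T)$; as $\mathbb{C}$ is the disjoint union of $\operatorname{int}\sigma(T)$, $\partial\sigma(T)$, and $\rho(T)$, we get $\mathbb{D} \subseteq \bigl(\operatorname{int}\sigma(T)\bigr) \cup \rho(T)$, a union of two disjoint open sets, so connectedness forces $\mathbb{D}$ entirely into one of them. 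If $\mathbb{D} \subseteq \rho(T)$ then $\sigma(T) \subseteq \overline{\mathbb{D}} \setminus \mathbb{D} = \mathbb{T}$; if $\mathbb{D} \subseteq \sigma(T)$ then passing to closures gives $\overline{\mathbb{D}} \subseteq \sigma(T) \subseteq \overline{\mathbb{D}}$, i.e. $\sigma(T) = \overline{\mathbb{D}}$. This is exactly the stated alternative.

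The one genuinely delicate point is the limit passage in the first paragraph: one must justify that each term $\|T^k x_j\|^p$ converges so that the (finite) isometry identity survives the limit. This is harmless because the sum has only $m+1$ terms and each converges, but it is the step that quietly uses both the boundedness of $T$ and the fact that $m$ is fixed. Everything else is standard spectral theory — the inclusion $\partial\sigma(T) \subseteq \sigma_{ap}(T)$ and the bounded-below criterion $0 \notin \sigma_{ap}(T)$ — so I expect no further obstacle.
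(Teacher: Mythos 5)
Your argument is correct: the approximate-eigenvector computation giving $(|\lambda|^p-1)^m=0$, the bounded-below consequence for injectivity and closed range, and the connectedness argument for the dichotomy $\sigma(T)\subseteq\mathbb{T}$ or $\sigma(T)=\overline{\mathbb{D}}$ are all sound. The paper itself offers no proof — it imports this statement verbatim from Bayart's paper on $m$-isometries — and your proof is essentially the standard argument given there, so there is nothing further to reconcile.
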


 On the other hand, Rezaei proved the following properties for convex-cyclic operators.

 \begin {proposition} \label{propo2} \cite [Propositions 3.2 and 3.3] {Rezaei} Let $T\in L(X)$. If $T$ is convex-cyclic, then

 \begin{enumerate}
 \item $\|T\|>1$.

 \item $\sigma _{p}(T^*)\subset \Bbb C\setminus (\overline {\Bbb D} \cup \Bbb R)$.

 \end{enumerate}

 \end{proposition}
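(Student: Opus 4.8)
The plan is to establish the two assertions separately, both flowing from the way a bounded (or otherwise constrained) convex hull fails to be dense, with the second built on the Hahn--Banach characterization already available in the excerpt (Corollary~\ref{C:linearfunctionalcriteria}). For~(1) I would argue by contradiction. Suppose $\|T\|\le 1$ and let $x$ be a convex-cyclic vector. For any convex polynomial $p(z)=\sum_{k=0}^{n}a_k z^k$ (so $a_k\ge 0$ and $\sum_k a_k=1$) the triangle inequality gives
$$\|p(T)x\|\le \sum_{k=0}^{n}a_k\|T^k x\|\le \sum_{k=0}^{n}a_k\|x\|=\|x\|.$$
Hence $co(Orb(T,x))$ lies in the closed ball of radius $\|x\|$, so its closure cannot exhaust the (unbounded, nonzero) Banach space $X$. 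This contradicts convex-cyclicity and forces $\|T\|>1$.

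For~(2), let $\lambda\in\sigma_p(T^*)$ and pick a nonzero $f\in X^*$ with $T^*f=\lambda f$, so that $f(T^n x)=\lambda^n f(x)$ for all $n$ and every $x$. Fix a convex-cyclic vector $x$ and write $c:=f(x)$. By Corollary~\ref{C:linearfunctionalcriteria}, \emph{every} nonzero functional $g$ must satisfy $\sup_n Re\big(g(T^n x)\big)=\infty$. Taking $g=f$ shows $c\ne 0$, since otherwise the supremum is $0$. Taking $g=\zeta f$ for a scalar $\zeta\ne 0$ gives $g(T^n x)=\zeta\lambda^n c$; if $|\lambda|\le 1$ this sequence is bounded, so the supremum of its real part is finite, a contradiction. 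Therefore $|\lambda|>1$, i.e.\ $\lambda\notin\overline{\mathbb{D}}$.

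The step I expect to be the main obstacle is ruling out real $\lambda$, and the key is to exploit the freedom in the multiplier $\zeta$. Since $c\ne 0$, choose $\zeta=i/c$, so that $g=\zeta f$ is again a nonzero functional but now $g(T^n x)=i\lambda^n$. If $\lambda$ were real, every term $i\lambda^n$ would be purely imaginary, whence $Re(g(T^n x))=0$ for all $n$ and $\sup_n Re(g(T^n x))=0\ne\infty$, contradicting Corollary~\ref{C:linearfunctionalcriteria}. Thus $\lambda\notin\mathbb{R}$, and together with $|\lambda|>1$ this yields $\sigma_p(T^*)\subset\mathbb{C}\setminus(\overline{\mathbb{D}}\cup\mathbb{R})$. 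The essential subtlety is that no single functional suffices: the argument genuinely uses that the Hahn--Banach criterion must hold for \emph{all} scalar multiples $\zeta f$ simultaneously, which is precisely what lets the choice $\zeta=i/c$ annihilate the real parts and detect the forbidden real directions.
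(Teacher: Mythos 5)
Your proof is correct. Note that the paper does not actually prove this proposition --- it is quoted with a citation to Rezaei --- so there is no in-paper argument to compare against; but both halves of your argument are sound and self-contained. Part (1) is the standard observation that $\|T\|\le 1$ traps the convex hull of the orbit in the ball of radius $\|x\|$. Part (2) correctly uses that an eigenvector $f$ of $T^*$ with eigenvalue $\lambda$ satisfies $f(T^nx)=\lambda^n f(x)$, and then tests the Hahn--Banach characterization (Corollary~\ref{C:linearfunctionalcriteria}) against the scalar multiples $\zeta f$; the choice $\zeta=i/c$ to kill the real parts when $\lambda\in\RR$ is exactly the right move, and it is in fact the same device the paper itself uses in the proof of Theorem~\ref{T:diagonalnormaloperators}, where the functionals $e_k$ and $\frac{-i}{\overline{x_k}}e_k$ play the roles of your $f$ and $\zeta f$. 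The only (harmless) caveat is that part (2) presupposes complex scalars, which is the setting in which the statement is meant.
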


 %Since that if $T$ is a isometry, $\|T\|=1$ are no convex-cyclic isometries.
%
% By the other hand, it is easy to prove that there are no convex-cyclic m-isometries with $\sigma (T) = \overline {\Bbb D}$. If $T$ is a m-isometry and $\sigma (T) = \overline {\Bbb D}$, then  $0\in \sigma (T)$, as T is one-to-one, T has closed range, we obtain $0\in \sigma _{r}(T)$ and thus $0\in \sigma _{p}(T^*)$. Hence $T$ is not convex-cyclic.
%
% By the other hand, if $T$ is a m-isometry on a Banach space with  $\sigma(T) \subset T$. Then $T$ is an invertible m-isometry and thus if $m$ is even $T$ is an (m-1)-isometry.
%
% Hence in a Banach space there are no 2-isometries convex-cyclic.

\begin{theorem} An $m$-isometry on a  Banach space $X$ is not  convex-cyclic.
\end{theorem}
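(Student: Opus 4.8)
The plan is to argue by contradiction: assume $T$ is an $m$-isometry, say an $(m,p)$-isometry with $p>0$, that is convex-cyclic, and to contradict the Hahn--Banach characterization of convex-cyclic vectors (Corollary~\ref{C:linearfunctionalcriteria}). First I would collect the structural consequences already available. By Proposition~\ref{propo2}(1) we have $\|T\|>1$, and by Proposition~\ref{propo} the operator $T$ is injective, has closed range, and its spectrum satisfies either $\sigma(T)\subseteq\mathbb{T}$ or $\sigma(T)=\overline{\mathbb{D}}$; in particular $\sigma(T)\subseteq\overline{\mathbb{D}}$, so $r(T)\le 1$.

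The first real step is to reduce to the case of an invertible operator whose spectrum lies on the unit circle. Since $T$ is convex-cyclic, Proposition~\ref{2.3} gives that $T$ has dense range; as this range is also closed, $T$ is onto, and together with injectivity $T$ is invertible. Consequently $0\notin\sigma(T)$, which rules out $\sigma(T)=\overline{\mathbb{D}}$ and forces $\sigma(T)\subseteq\mathbb{T}$.

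Next I would extract the growth of orbits. Writing $a_n=\|T^nx\|^p$, the defining identity of an $(m,p)$-isometry says that the $m$-th forward difference of $(a_n)$ vanishes; applying the identity to the vectors $T^{-j}x$ (legitimate now that $T$ is invertible) shows the same difference relation holds for all $n\in\mathbb{Z}$, and in particular that $T^{-1}$ is again an $(m,p)$-isometry. Hence $\|T^nx\|^p=P_x(n)$ for a single real polynomial $P_x$ of degree at most $m-1$, and since $T^nx\neq 0$ for every $n$, $P_x$ is strictly positive on $\mathbb{Z}$; thus $P_x$ is either constant or has even degree with positive leading coefficient. In particular $\|T^nx\|$ grows at most polynomially and $\|T^{n+1}x\|/\|T^nx\|\to 1$ as $n\to\infty$.

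Finally I would derive the contradiction from Corollary~\ref{C:linearfunctionalcriteria}, which asserts that $x$ is convex-cyclic exactly when $\sup_n \operatorname{Re} f(T^nx)=\infty$ for every nonzero continuous functional $f$. The goal is to exhibit, for an arbitrary $x$, a nonzero $f$ with $\sup_n\operatorname{Re}f(T^nx)<\infty$. When $P_x$ is constant the orbit is bounded and any $f$ works, so assume $P_x$ has positive degree. Setting $v_n=T^nx/\|T^nx\|$ on the unit sphere, I would analyze the asymptotic directions of the orbit and produce $f$ with $\limsup_n\operatorname{Re}f(v_n)<0$; then $\operatorname{Re}f(T^nx)=\|T^nx\|\,\operatorname{Re}f(v_n)\to-\infty$, giving the required bounded-above functional. \emph{This last step is the main obstacle.} The estimates $\|T^{n+1}x\|/\|T^nx\|\to 1$ and the two-sided polynomial growth show that the directions $v_n$ move slowly, but one must still rule out that they wind around the sphere and fill every half-space; here I would adapt the asymptotic-direction analysis of Faghih and Hedayatian~\cite{FaHe}, now exploiting that $\sigma(T)\subseteq\mathbb{T}$ and that $T^{-1}$ is again an $(m,p)$-isometry, to trap the forward orbit in a half-space and close the contradiction.
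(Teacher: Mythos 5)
Your proposal sets up the right framework (the Hahn--Banach characterization, the structural facts from Propositions~\ref{propo} and~\ref{propo2}, the polynomial growth $\|T^nx\|^p=P_x(n)$ of degree at most $m-1$), but it stops exactly where the proof has to be done. The theorem is equivalent, via Corollary~\ref{C:linearfunctionalcriteria}, to the statement that for every $x$ there is a nonzero functional $f$ with $\sup_n \operatorname{Re} f(T^nx)<\infty$, and your plan for producing such an $f$ --- analyzing the asymptotic directions $v_n=T^nx/\|T^nx\|$, showing $\limsup_n\operatorname{Re}f(v_n)<0$ for some $f$, by ``adapting'' Faghih--Hedayatian --- is not an argument. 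You flag it yourself as the main obstacle, and it is a genuine one: there is no a priori reason the directions $v_n$ stay out of some half-space, the slow variation $\|T^{n+1}x\|/\|T^nx\|\to 1$ does not prevent them from eventually meeting every open half-space, and the Faghih--Hedayatian argument addresses weak hypercyclicity (density of $f(\mathrm{Orb})$), a much stronger requirement on the orbit than $\sup\operatorname{Re}f(\mathrm{Orb})=\infty$, so refuting it is harder, not easier. As written, the proof is incomplete.

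The paper closes this gap with a different device that your polynomial-growth observation is actually pointing toward: following Bayart, set $|y|:=\lim_n \|T^ny\|/n^{(m-1)/p}$ (after reducing to a \emph{strict} $(m,p)$-isometry with $m\ge 2$; the case $m=1$ is immediate from $\|T\|>1$). This is a continuous $T$-invariant semi-norm, i.e.\ $|Ty|=|y|$ and $|y|\le C\|y\|$, with $\mathrm{Ker}(|\cdot|)$ a proper $T$-invariant subspace. Passing to $Y=X/\mathrm{Ker}(|\cdot|)$ and its completion, the induced operator is an isometry, and the estimate $|y-p_n(T)x|\le C\|y-p_n(T)x\|$ shows convex-cyclicity survives the quotient; an isometry cannot be convex-cyclic, contradiction. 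Note that this construction in particular \emph{produces} the functional you were looking for: any nonzero functional on $Y$ pulled back to $X$ is bounded on the orbit, since $|T^nx|=|x|$ for all $n$. If you want to complete your own route, that invariant semi-norm and the quotient by its kernel is the missing idea; the detour through invertibility and $\sigma(T)\subseteq\mathbb{T}$ is correct but unnecessary.
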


\begin{proof}
%Assume that $T$ is convex-cyclic and $m$-isometry.
%Then we will proof that  $\sigma (T) \subseteq \partial \mathbb{D}$. Since,  $T$ is convex-cyclic,
%then the range of $T$ is dense, that is, $\overline{R(T)}=X$. Moreover, $T$ is an $m$-isometry, so $R(T)$ is closed and $T$ is injective and hence $T$ is invertible.
% By  Proposition \ref{propo} we have that
%$\sigma(T) \subseteq \partial  \mathbb{D}$.

If $m=1$ and $T$ is an $m$-isometry, then $T$ is actually an isometry, thus $\|T\| = 1$ and thus by part (1) of  Proposition \ref{propo2}, $T$ cannot be convex-cyclic.
%If $T$ is convex-cyclic and $2$-isometry, then $T$ is an invertible $2$-isometry. By \cite[Proposition 2.4]{Bayart} we obtain that $T$ is an isometry so we get a contradiction.

Assume that  $m\geq 2$ and that $T$ is convex-cyclic and  a strict $(m,p)$-isometry for some $p>0$. We will use an argument similar to the proof of \cite[Theorem 3.3]{Bayart}.
Let
$$
\displaystyle |x|: = \lim_{n\to \infty } \frac{\|T^{n}x\|}{n^{\frac{m-1}{p}}}\;.
 $$
 By \cite[Proposition 2.2]{Bayart} we have that $|.|$ is a semi-norm on $X$ and   $T(Ker(|.|) \subset Ker(|.|)$, where $Ker (T)$ denotes the  kernel of $T$.
Also the codimension of $Ker(| .|) $ is positive, because $T$ is not a $(m-1)$-isometry.
Moreover, for each $x\in X$, $|Tx| = |x|$ and there exists $C>0$ such that $|x|\leq C\| x\| $ for all $x\in X$.

Let $Y: = X/Ker(|.|)$ and $ \overline{T}$ be the operator induced by $T$ on $Y$. Then  $|\overline{T} \overline{x}|= |\overline{x}|$ for all $\overline{x} \in Y$. So,
$\overline T$ is an isometry on $Y$.

 Since $T$ is convex-cyclic there exists a vector  $x\in X$ such that the convex  hull generated by $Orb(T,x)$ is dense in $X$.
% There exists $C>0$ such that  $|y|\le C \|y\|$   for all $y \in X$.
  Given $y\in X$ and $\varepsilon >0$ there exists a convex polynomial such that $\displaystyle \|y- p_{n}(T)x\|< \frac{\varepsilon}{C}$.
  Thus $|y- p_{n}(T)x|\leq C\| y-p_n(T)x\|<\varepsilon$. Then $|\overline y- p_{n}(\overline T)\overline x|< \varepsilon$ and we obtain that $\overline T$ is convex-cyclic in $Y$.

Thus the extension of  $\overline T$ to the completion of $Y$ is a convex-cyclic isometry on a Banach space; which  is a contradiction.
\end{proof}

\begin{corollary}
An $m$-isometry on a Banach space is not 1-weakly hypercyclic.
\end{corollary}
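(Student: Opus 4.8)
The plan is to obtain this corollary as an immediate consequence of the theorem just proved, combined with the chain of implications recorded in the Introduction. The only fact I need is that every $1$-weakly hypercyclic operator is convex-cyclic; granting that, the corollary is simply the contrapositive of the statement that an $m$-isometry is never convex-cyclic.

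First I would justify the implication ``$1$-weakly hypercyclic $\Rightarrow$ convex-cyclic'' via the Hahn--Banach characterization. Suppose $T$ has a $1$-weakly hypercyclic vector $x$, so that $f(Orb(T,x))$ is dense in $\mathbb{C}$ for every non-zero continuous linear functional $f$ on $X$. In particular $Re(f(Orb(T,x)))$ is dense in $\mathbb{R}$, and hence $\sup Re(f(Orb(T,x))) = \infty$. By Corollary~\ref{C:linearfunctionalcriteria} this is exactly the assertion that $x$ is a convex-cyclic vector for $T$, so $T$ is convex-cyclic. (In fact this shows every $1$-weakly hypercyclic vector is already a convex-cyclic vector.)

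Then I would finish by contradiction: if some $m$-isometry $T$ on a Banach space $X$ were $1$-weakly hypercyclic, the preceding step would force $T$ to be convex-cyclic, contradicting the theorem that no $m$-isometry is convex-cyclic. Therefore no $m$-isometry can be $1$-weakly hypercyclic.

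There is essentially no obstacle to overcome here, since all the real work was done in establishing that $m$-isometries fail to be convex-cyclic. The only point meriting explicit attention is the passage from $1$-weak hypercyclicity to convex-cyclicity, and even that reduces to the simple observation that density of $f(Orb(T,x))$ in $\mathbb{C}$ makes $Re(f(Orb(T,x)))$ unbounded above, which is precisely condition~(2) of the Hahn--Banach characterization.
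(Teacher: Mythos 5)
Your proof is correct and follows the same route the paper intends: the corollary is the contrapositive of the theorem that $m$-isometries are not convex-cyclic, combined with the implication (stated in the Introduction) that every $1$-weakly hypercyclic operator is convex-cyclic. Your justification of that implication via Corollary~\ref{C:linearfunctionalcriteria} is exactly the natural argument and is valid.
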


%\section{$\varepsilon$-hypercyclic}

\section{ $\varepsilon$-hypercyclic operators versus  convex-cyclic operators}

Let us now exhibit the relation between $\varepsilon$-hypercyclic and convex-cyclic operators.

\begin{theorem} Every $\varepsilon$-hypercyclic vector  is a convex-cyclic vector.
 \end{theorem}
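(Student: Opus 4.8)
The plan is to verify the Hahn--Banach characterization of convex-cyclic vectors, Corollary~\ref{C:linearfunctionalcriteria}. So if $x$ is an $\varepsilon$-hypercyclic vector for $T$, I must show that $\sup Re(f(Orb(T,x))) = \infty$ for every non-zero continuous linear functional $f$ on $X$. I would argue by contradiction: suppose that for some non-zero $f$ the quantity $M := \sup_{n\geq 0} Re(f(T^n x))$ is finite. The goal is then to produce a single non-zero vector $y$ that no iterate $T^n x$ can approximate to within $\varepsilon\|y\|$, which directly contradicts the definition of an $\varepsilon$-hypercyclic vector.

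The construction of $y$ is the heart of the argument, and it is where the hypothesis $\varepsilon<1$ must be used. Since $\|f\| = \sup_{\|v\|=1} Re(f(v))$ (in the complex case one rotates $v$ by $-\arg f(v)$, which preserves $\|v\|$ and makes $f(v)$ real and positive), and since $\varepsilon<1$, there exists a unit vector $v$ with $Re(f(v)) =: a > \varepsilon\|f\|$. I would set $y = t v$ for a large parameter $t>0$ to be chosen. Using $Re(f(T^n x)) \leq M$ and $Re(f(y)) = ta$, for every $n$ with $ta > M$ the elementary estimate
$$
\|T^n x - y\| \;\geq\; \frac{|Re(f(T^n x - y))|}{\|f\|} \;\geq\; \frac{ta - M}{\|f\|}
$$
holds, since $Re(f(T^n x - y)) = Re(f(T^n x)) - ta \leq M - ta < 0$.

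To finish, I would choose $t$ so large that $t(a - \varepsilon\|f\|) > M$; because $a-\varepsilon\|f\|>0$ this is possible, and because $\|v\|=1$ we have $\|y\| = t$, so the displayed bound becomes $\|T^n x - y\| \geq \tfrac{ta-M}{\|f\|} > \varepsilon t = \varepsilon\|y\|$ for all $n\geq 0$. This contradicts the fact that $x$ is $\varepsilon$-hypercyclic, so $M$ cannot be finite. By Corollary~\ref{C:linearfunctionalcriteria}, $x$ is therefore a convex-cyclic vector, which is exactly the claim.

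The only genuine obstacle is the selection of the vector $v$: everything reduces to securing the strict gap $a > \varepsilon\|f\|$, which is precisely what the strict inequality $\varepsilon<1$ and the identity $\sup_{\|v\|=1}Re(f(v)) = \|f\|$ provide. This gap is what lets the growth in $Re(f(y))$ outrun the factor $\varepsilon$ appearing in the $\varepsilon$-hypercyclic inequality; once it is in hand, the remaining steps are routine estimates for a bounded linear functional.
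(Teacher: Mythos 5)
Your proof is correct, but it takes a genuinely different route from the one in the paper. The paper argues directly and constructively: given $y \neq 0$ and $\delta > 0$, it fixes $N$ with $2\varepsilon^N\|y\| < \delta$ and then iteratively applies the $\varepsilon$-hypercyclicity of $x$ to the successive residuals $Ny - (T^{k_1}x + \cdots + T^{k_j}x)$, gaining a factor of $\varepsilon$ at each of $N$ steps; the resulting average $\frac{1}{N}\left(T^{k_1}x + \cdots + T^{k_N}x\right)$ is then within $\varepsilon^N\|y\| < \delta$ of $y$, so the approximating convex polynomial is exhibited explicitly (indeed as an average of $N$ monomials). You instead route everything through the Hahn--Banach characterization (Corollary~\ref{C:linearfunctionalcriteria}) and derive a contradiction from the finiteness of $\sup_n Re(f(T^nx))$ by producing a single far-away witness $y = tv$; the key observation that $\sup_{\|v\|=1} Re(f(v)) = \|f\| > \varepsilon\|f\|$ gives exactly the gap needed to defeat the relative error $\varepsilon\|y\|$, and the choice $t(a - \varepsilon\|f\|) > M$ closes the argument. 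Your version is shorter and ``softer,'' exploiting machinery the paper has already built; the paper's version yields the extra quantitative information that the dense convex combinations can be taken to be uniform averages of orbit elements, which connects $\varepsilon$-hypercyclicity to the notion of hypercyclicity with finite support discussed in the introduction. Both arguments are complete and valid.
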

\begin{proof}
Let $x$ be an  $\varepsilon$-hypercyclic vector for an operator $T$ and we will prove that for  a non-zero vector $y\in X$ and $\delta >0$, there exists a convex polynomial $p$ such that
$$
\|p(T)x-y\|<\delta \;.
$$
Since $\varepsilon \in (0,1)$, there exists  $N\in \Bbb N$ such that $2\varepsilon^N\|y\|<\delta$. As $x$ is an  $\varepsilon$-hypercyclic  vector for $T$, there exists a positive integer $k_1$ such that
$$
\left\|T^{k_1}x-Ny\|\le \varepsilon \|Ny\right\|=\varepsilon N\|y\| \;.
$$
If $T^{k_1}x-Ny=0$, we choose $l_2$ such that
$$
\left\|T^{l_2}x-\frac{N}{N-1}\varepsilon ^{N}y\right\|\le \varepsilon ^{N+1} \frac{N}{N-1}\|y\| \;.
$$
Thus
$$
\left\|\frac{N-1}{N}T^{l_2}x-\varepsilon ^Ny\right\|\le \varepsilon ^{N+1}\|y\| \;.
$$
Hence
$$
\left\|\frac{1}{N}T^{k_1}x+\frac{N-1}{N}T^{l_2}x-y\right\| = \left\| \frac{N-1}{N} T^{l_2} x\right\| \le 2\varepsilon ^N\|y\|<\delta
$$
and the proof ends by letting $p(z) = \frac{1}{N}z^{k_1} + \frac{N-1}{N}z^{l_2}$.

If  $T^{k_1}x-Ny \neq0$, there exists a positive integer $k_2$ such that
$$
\left\|T^{k_1}x+T^{k_2}x-Ny\|=\|T^{k_2}x-(Ny-T^{k_1}x)\right\|\le\varepsilon \|Ny-T^{k_1}x\|\le\varepsilon^2 N\|y\| \;.
$$

If $T^{k_1}x+T^{k_2}x-Ny=0$, analogously to the above situation we choose $l_3$ such that
$$
\left\|\frac{1}{N}T^{k_1}x+\frac{1}{N}T^{k_2}x+\frac{N-2}{N}T^{l_3}x-y\right\| = \left\| \frac{N-2}{N} T^{l_3} x \right\|\le 2\varepsilon ^N\|y\|<\delta
$$
and the proof ends.

If  $T^{k_1}x+T^{k_2}x-Ny\neq 0$, there exists a positive integer $k_3$ such that
$$
\left\|T^{k_1}x+T^{k_2}x+T^{k_3}x-Ny\right\|\le\varepsilon^3 N\|y\| \;.
$$

By induction, in the step $N$,
if $T^{k_1}x+T^{k_2}x+ \cdots +T^{k_{N-1}}x-Ny=0$, we choose $l_N$ such that
$$
\left\|\frac{1}{N}T^{k_1}x+\frac{1}{N}T^{k_2}x+\cdots+\frac{1}{N}T^{k_{N-1}}x+ \frac{1}{N}T^{l_N}x-y\right\|\le 2\varepsilon ^N\|y\|<\delta
$$
and the proof ends.

If  $T^{k_1}x+T^{k_2}x+ \cdots +T^{k_{N-1}}x-Ny\neq0$, there exists a positive integer $k_N$ such that
$$
\left\|T^{k_1}x+T^{k_2}x+\cdots+T^{k_{N-1}}x+ T^{k_N}x-Ny\right\|\le \varepsilon ^N N\|y\|
$$

Thus
$$
\left\|\frac{T^{k_1}x+\cdots +T^{k_N}x}{N}-y\right\|\le \varepsilon^N \|y\|<\delta
$$
Ending completely the proof.
\end{proof}

%%%%%%%%%%%%%%
\section{ Diagonal Operators and  Adjoint Multiplication Operators}
%%%%%%%%%%%%%%%

By a Fr\'{e}chet space we mean a locally convex space that is complete with respect to a translation invariant metric.

If $\mathcal{A}$ is a nonempty collection of polynomials and $T$ is an operator on a space $X$, then $T$ is said to be \emph{$\mathcal{A}$-cyclic} and $x \in X$ is said to be an \emph{$\mathcal{A}$-cyclic vector} for $T$ if $\{ p(T)x : p \in \mathcal{A}\}$ is dense in $X$.  Furthermore, $T$ is said to be \emph{$\mathcal{A}$-transitive} if for any two nonempty open sets $U$ and $V$ in $X$, there exists a $p \in \mathcal{A}$ such that $p(T)U \cap V \neq \emptyset$.  Since the set of all polynomials with the topology of uniform convergence on compact sets in the complex plane forms a separable metric space, then any set of polynomials is also separable, hence the following result is routine (see for example the Universality Criterion in \cite[Theorem 1.57]{Grosse-Peris}).

\begin{proposition}  \label{P:A-transitive}
Suppose that $T:X \to X$ is a continuous linear operator on a real or complex Fr\'{e}chet space and $\mathcal{A}$ is a nonempty set of polynomials.  Then the following are equivalent:
\begin{enumerate}
\item $T$ has a dense set of $\mathcal{A}$-cyclic vectors.
\item $T$ is $\mathcal{A}$-transitive.  That is, for any two nonempty open sets $U, V$ in $X$, there is a polynomial $p \in \mathcal{A}$ such that $p(T)U \cap V \neq \emptyset$.
\item $T$ has a dense $G_\delta$ set of $\mathcal{A}$-cyclic vectors.
\end{enumerate}
\end{proposition}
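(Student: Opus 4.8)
The plan is to read this as the Birkhoff transitivity theorem for the family of operators $\{p(T) : p \in \mathcal{A}\}$: a vector $x$ is an $\mathcal{A}$-cyclic vector for $T$ precisely when it is a universal element for this family. The two implications $(3) \Rightarrow (1)$ and $(1) \Rightarrow (2)$ are immediate, so the real content is $(2) \Rightarrow (3)$, which I would obtain by a Baire category argument once the index family has been replaced by a countable one. For $(3) \Rightarrow (1)$ note only that a dense $G_\delta$ set is dense. For $(1) \Rightarrow (2)$, given nonempty open $U, V$, use density of the $\mathcal{A}$-cyclic vectors to pick an $\mathcal{A}$-cyclic vector $x \in U$; since $\{p(T)x : p \in \mathcal{A}\}$ is dense it meets $V$, so some $p \in \mathcal{A}$ satisfies $p(T)x \in V$, and then $p(T)x \in p(T)U \cap V$.

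To prepare the Baire argument I would exploit the separability of the space of polynomials (in the topology of uniform convergence on compacta) to fix a countable subset $\mathcal{A}_0 \subseteq \mathcal{A}$ that is dense in $\mathcal{A}$. The decisive point is that the functional calculus $p \mapsto p(T)$ is continuous: from the Cauchy estimates $|a_k| \le r^{-k}\sup_{|z|=r}|p(z)|$ for the coefficients of $p(z) = \sum_k a_k z^k$, together with the seminorm bounds for the powers of $T$, one gets that $p_n \to p$ uniformly on compacta forces $p_n(T)x \to p(T)x$ for every $x \in X$. Consequently $\overline{\{p(T)x : p \in \mathcal{A}_0\}} = \overline{\{p(T)x : p \in \mathcal{A}\}}$ for each $x$, so the $\mathcal{A}_0$-cyclic vectors coincide with the $\mathcal{A}$-cyclic vectors, and likewise $\mathcal{A}_0$-transitivity is equivalent to $\mathcal{A}$-transitivity. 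Since any $\mathcal{A}_0$-cyclic vector then furnishes a countable dense subset of $X$, we may assume $X$ is separable (otherwise no cyclic vector exists and the equivalence is vacuous), and fix a countable base $(V_j)_{j \in \mathbb{N}}$ of its topology.

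Now I would carry out $(2) \Rightarrow (3)$ with $\mathcal{A}_0 = \{p_1, p_2, \ldots\}$. A vector $x$ is $\mathcal{A}$-cyclic iff its orbit meets every basic open set $V_j$, so the set of $\mathcal{A}$-cyclic vectors equals
$$\mathcal{C} = \bigcap_{j} \bigcup_{n} \, p_n(T)^{-1}(V_j).$$
Each $p_n(T)^{-1}(V_j)$ is open by continuity of $p_n(T)$, so each inner union $G_j := \bigcup_n p_n(T)^{-1}(V_j)$ is open; and $(2)$ says exactly that for every nonempty open $U$ some $p_n(T)U$ meets $V_j$, i.e. $U \cap G_j \neq \emptyset$, so each $G_j$ is dense. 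A Fréchet space is a Baire space, whence the countable intersection $\mathcal{C} = \bigcap_j G_j$ of dense open sets is a dense $G_\delta$, which is $(3)$. This is precisely the textbook universality criterion (\cite[Theorem 1.57]{Grosse-Peris}) applied to the sequence $(p_n(T))_n$.

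The only step I expect to require genuine care is the continuity of $p \mapsto p(T)$ used to pass to the countable subfamily $\mathcal{A}_0$ and to guarantee that the sets $p_n(T)^{-1}(V_j)$ are open: on a Banach space this follows cleanly from the Cauchy coefficient estimates and the elementary bound $\|p(T)\| \le \tfrac{r}{\,r-\|T\|\,}\sup_{|z|=r}|p(z)|$ for $r > \|T\|$, while on a general Fréchet space it needs the analogous estimate formulated with the defining seminorms. Everything after that reduction is routine Baire category.
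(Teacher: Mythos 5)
Your proof follows exactly the route the paper intends: the paper offers no written proof of this proposition, only the preceding remark that any set of polynomials is separable in the compact-open topology together with a citation of the Universality Criterion, and your argument is precisely that reduction-to-a-countable-family plus Baire category, spelled out. The implications $(3)\Rightarrow(1)\Rightarrow(2)$ and the transitivity-implies-dense-$G_\delta$ step are all correct as you give them.

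One technical point deserves more care than you (or the paper) give it, and it is exactly the point you flag as delicate: the continuity of $p \mapsto p(T)x$ with respect to uniform convergence on compacta. On a general Fr\'echet space the estimate $\|p(T)\| \le \frac{r}{r-\|T\|}\sup_{|z|=r}|p(z)|$ has no direct analogue, because controlling $q(T^k x)$ for a continuous seminorm $q$ requires a seminorm that changes with $k$, so the geometric series argument breaks down when the degrees of the approximating polynomials $p_n$ are unbounded (e.g.\ $p_n = p + z^n/n!$ converges to $p$ uniformly on compacta, yet $T^n x/n!$ need not tend to $0$). The clean fix is to choose the countable subset $\mathcal{A}_0$ degree by degree: write $\mathcal{A} = \bigcup_d \{p \in \mathcal{A} : \deg p \le d\}$, take a countable subset of $\mathcal{A}$ whose intersection with each degree-$d$ stratum is dense there in the (finite-dimensional) coefficient topology, and note that for polynomials of bounded degree, coefficientwise convergence trivially gives $p_n(T)x \to p(T)x$ since the sum is finite. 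With that adjustment everything else in your argument goes through verbatim, and the openness of the sets $p_n(T)^{-1}(V_j)$ needs only continuity of each fixed $p_n(T)$, which is immediate.
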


By choosing various sets of polynomials for $\mathcal{A}$, we can get results for hypercyclic and supercyclic operators, as well as cyclic operators that have a dense set of cyclic vectors.  If $\mathcal{A}$ is the set of all convex polynomials, then we get the following immediate corollary.

\begin{corollary}  \label{C:convex transitive}
Let $T:X \to X$ be a continuous linear operator on a real or complex Fr\'{e}chet space, then the following are equivalent.
\begin{enumerate}
\item $T$ has a dense set of convex-cyclic vectors.
\item $T$ is convex-transitive.  That is, for any two nonempty open sets $U, V$ in $X$, there is a convex polynomial $p$ such that $p(T)U \cap V \neq \emptyset$.
\item $T$ has a dense $G_\delta$ set of convex-cyclic vectors.
\end{enumerate}
\end{corollary}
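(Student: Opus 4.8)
The plan is to obtain the statement as a direct specialization of Proposition~\ref{P:A-transitive}, taking $\mathcal{A}$ to be the set of all convex polynomials. First I would check that this $\mathcal{A}$ is admissible for that proposition, i.e. that it is a nonempty set of polynomials. This is immediate: the constant polynomial $p(z)=1$ and the monomial $p(z)=z$ are both convex polynomials, so $\mathcal{A} \neq \emptyset$, and by hypothesis $X$ is a real or complex Fr\'echet space and $T$ is a continuous linear operator, so every standing assumption of Proposition~\ref{P:A-transitive} is in force.

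Next I would match the terminology of the two statements. By the description of the convex hull of an orbit recorded in the Introduction, $co(Orb(T,x)) = \{ p(T)x : p \text{ is a convex polynomial}\}$, so for this choice of $\mathcal{A}$ the set $\{ p(T)x : p \in \mathcal{A}\}$ is exactly $co(Orb(T,x))$. Hence a vector $x$ is a convex-cyclic vector for $T$ precisely when it is an $\mathcal{A}$-cyclic vector, and the assertion in part (1) that $T$ has a dense set of convex-cyclic vectors is identical to the assertion that $T$ has a dense set of $\mathcal{A}$-cyclic vectors. The convex-transitivity of part (2) is, word for word, the $\mathcal{A}$-transitivity of the proposition for this $\mathcal{A}$, and the dense $G_\delta$ statements in part (3) coincide as well. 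With these identifications in place, the three conditions of the Corollary are literally the three conditions of Proposition~\ref{P:A-transitive}, and their equivalence follows at once.

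I do not expect any genuine obstacle at this level, since all of the mathematical content is already carried by Proposition~\ref{P:A-transitive}, itself a routine instance of the Universality Criterion (\cite[Theorem 1.57]{Grosse-Peris}). If one instead wished to prove the Corollary from scratch, the only substantive step would be the equivalence between convex-transitivity and the existence of a dense $G_\delta$ set of convex-cyclic vectors. This is the usual Baire category argument: using that the family of convex polynomials, endowed with the topology of uniform convergence on compact subsets of the plane, is separable, one fixes a countable dense sequence of open balls, writes the set of convex-cyclic vectors as a countable intersection of the open sets $\bigcup_{p \in \mathcal{A}} \{ x : p(T)x \in V_j \}$, and invokes transitivity to show each such set is dense. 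The remaining implications among (1), (2), (3) are then immediate, with the dense $G_\delta$ statement trivially implying a dense set, and a single convex-cyclic vector furnishing convex-transitivity.
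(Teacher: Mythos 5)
Your proposal is correct and matches the paper exactly: the Corollary is obtained by specializing Proposition~\ref{P:A-transitive} to the case where $\mathcal{A}$ is the set of all convex polynomials, which the paper presents as an immediate consequence with no further argument. The extra sketch of the Baire category argument is consistent with how Proposition~\ref{P:A-transitive} itself is justified via the Universality Criterion, but is not needed for the Corollary.
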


\begin{proposition}  \label{P:infinitedirectsums}
Let  $\mathcal{A}$ be a nonempty set of polynomials and let $\{T_k:X_k \to X_k\}_{k=1}^\infty$ be a uniformly bounded sequence of linear operators on a sequence of Banach spaces $\{X_k\}_{k=1}^\infty$ such that for every $n \geq 1$, the operator $\displaystyle S_n = \bigoplus_{k=1}^n T_k$ on $\displaystyle X^{(n)} = \bigoplus_{k=1}^n X_k$ has a dense set of $\mathcal{A}$-cyclic vectors.  Then $\displaystyle T = \bigoplus_{k=1}^\infty T_k$ is $\mathcal{A}$-cyclic on $\displaystyle X^{(\infty)} = \bigoplus_{k=1}^\infty X_k$ and $T$ has a dense set of $\mathcal{A}$-cyclic vectors.
\end{proposition}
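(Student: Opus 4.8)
The plan is to reduce to the finite case by means of the transitivity criterion of Proposition~\ref{P:A-transitive}. Write $P_n : X^{(\infty)} \to X^{(n)}$ for the projection onto the first $n$ coordinates and $J_n : X^{(n)} \to X^{(\infty)}$ for the isometric inclusion that appends zeros; these are the natural truncation maps of the direct sum, and they satisfy $\|J_n P_n x - x\| \to 0$ for every $x \in X^{(\infty)}$, since the tail of each vector vanishes. The uniform bound $\sup_k \|T_k\| < \infty$ guarantees that $T = \bigoplus_k T_k$ is a bounded operator on the (separable) Banach space $X^{(\infty)}$, so Proposition~\ref{P:A-transitive} applies to $T$; it therefore suffices to prove that $T$ is $\mathcal{A}$-transitive, as this already yields a dense (indeed dense $G_\delta$) set of $\mathcal{A}$-cyclic vectors. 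The crucial structural observation is that a finitely supported vector stays finitely supported under $T$: if $w = J_n w'$ with $w' \in X^{(n)}$, then $T^j w = J_n S_n^j w'$ for every $j$, whence
\[
p(T) J_n = J_n\, p(S_n) \qquad \text{for every polynomial } p .
\]

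Next I would verify $\mathcal{A}$-transitivity directly. Fix nonempty open sets $U, V \subseteq X^{(\infty)}$ and pick $u \in U$, $v \in V$ together with $\varepsilon > 0$ so that $B(u, 2\varepsilon) \subseteq U$ and $B(v, 2\varepsilon) \subseteq V$. Using the tail estimate, choose $n$ so large that $\|u - J_n P_n u\| < \varepsilon$ and $\|v - J_n P_n v\| < \varepsilon$, and set $u' = P_n u$ and $v' = P_n v$ in $X^{(n)}$. Since $S_n$ has a dense set of $\mathcal{A}$-cyclic vectors, Proposition~\ref{P:A-transitive} tells us $S_n$ is $\mathcal{A}$-transitive; applying this to the balls $B(u', \varepsilon)$ and $B(v', \varepsilon)$ in $X^{(n)}$ produces a polynomial $p \in \mathcal{A}$ and a vector $w' \in B(u', \varepsilon)$ with $p(S_n) w' \in B(v', \varepsilon)$. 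Setting $w = J_n w'$ and invoking the displayed identity, the triangle inequality gives $\|w - u\| \le \|w' - u'\| + \|J_n P_n u - u\| < 2\varepsilon$, so $w \in U$, while $p(T) w = J_n p(S_n) w'$ satisfies $\|p(T) w - v\| \le \|p(S_n) w' - v'\| + \|J_n P_n v - v\| < 2\varepsilon$, so $p(T) w \in V$. Hence $p(T) U \cap V \neq \emptyset$, and $T$ is $\mathcal{A}$-transitive. Applying Proposition~\ref{P:A-transitive} once more, this time to $T$, yields the desired dense set of $\mathcal{A}$-cyclic vectors, and in particular shows $T$ is $\mathcal{A}$-cyclic.

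The only point requiring genuine care is the control of the coordinates beyond $n$, and this is precisely why I would take the witness $w$ with finite support: its image $p(T)w$ then automatically has finite support and equals $J_n p(S_n) w'$, so no estimate on the norms of the powers $T^j$ — which could otherwise grow with $j$ and ruin any naive tail bound — is ever needed. The uniform boundedness hypothesis thus enters only to make $T$ a well-defined bounded operator, while the separability of $X^{(\infty)}$, inherited from the $X_k$, is what licenses the universality criterion underlying Proposition~\ref{P:A-transitive}. I expect the verification of the algebraic identity $p(T)J_n = J_n p(S_n)$ and the two triangle-inequality estimates to be routine, so the substance of the argument lies entirely in the reduction to the transitivity of the finite truncations $S_n$.
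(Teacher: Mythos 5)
Your proposal is correct and follows essentially the same route as the paper: both reduce to the finite truncations via the density of finitely supported vectors, use the identity $p(T)J_n = J_n p(S_n)$ (implicitly in the paper), and conclude that $T$ is $\mathcal{A}$-transitive so that Proposition~\ref{P:A-transitive} applies. The only cosmetic difference is that the paper picks an $\mathcal{A}$-cyclic vector for $S_N$ near the truncation of $x$ and then uses its cyclicity, whereas you invoke the equivalent $\mathcal{A}$-transitivity of $S_n$ directly on two balls.
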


\begin{proof}
Suppose that for every $n \geq 1$ the operators $S_n$ are $\mathcal{A}$-cyclic and have a dense set of $\mathcal{A}$-cyclic vectors.  We will show that $T$ is $\mathcal{A}$-transitive.  Let $U$ and $V$ be two nonempty open sets in $X^{(\infty)}$.  Since the vectors in $X$ with only finitely many non-zero coordinates are dense in $X$, then we may choose vectors $x = (x_k)_{k=1}^\infty$ and $y = (y_k)_{k=1}^\infty$  in $X^{(\infty)}$ such that $x_k = 0$ and $y_k = 0$ for all sufficiently large $k$, say $x_k =0$ and $y_k = 0$ for all $k \geq N$, and such that $x \in U$ and $y \in V$.   Since $S_N$ is $\mathcal{A}$-cyclic and has a dense set of $\mathcal{A}$-cyclic vectors in $X^{(N)}$, there exists a vector $u = (u_1, u_2, \ldots, u_N) \in X^{(N)}$ such that $u$ is an $\mathcal{A}$-cyclic vector for $S_N$ and so that  $(u_1, u_2, \ldots, u_N)$ is close enough to $(x_1, x_2, \ldots, x_N)$ so that  the infinite vector $\hat{u} = (u_1, u_2, \ldots, u_N, 0, 0, \ldots) \in U$.  Since $S_N$ is $\mathcal{A}$-cyclic, there is a polynomial $p \in \mathcal{A}$ such that $p(S_N)(u_1, u_2, \ldots, u_N)$ is close enough to $(y_1, y_2, \ldots, y_N)$ such that $p(T) \hat{u} \in V$.  Thus, $T$ is $\mathcal{A}$-transitive on $X^{(\infty)}$, and thus by Proposition~\ref{P:A-transitive} we have that  $T$ has a dense set of $\mathcal{A}$-cyclic vectors.
\end{proof}

We next apply the previous proposition to infinite diagonal  operators where $\mathcal{A}$ is the set of all convex polynomials.   This extends the finite dimensional matrix result given by Rezaei~\cite[Corollary 2.7]{Rezaei} to infinite dimensional diagonal matrices.

\begin{theorem} \label{T:diagonalnormaloperators} Suppose that $T$ is a diagonalizable normal operator on a separable (real or complex) Hilbert space with eigenvalues $\{\lambda_k\}_{k=1}^\infty$.

(a) If the Hilbert space is complex, then $T$ is convex-cyclic if and only if  we have that the eigenvalues $\{\lambda_k\}_{k=1}^\infty$ are distinct and for every $k \geq 1$, $| \lambda_k | > 1$  and $Im(\lambda_k) \neq 0$.	

(b) If the Hilbert space is real, then $T$ is convex-cyclic if and only if the eigenvalues $\{\lambda_k\}_{k=1}^\infty$ are distinct and for every $k \geq 1$ we have that $\lambda_k < -1$.	
\end{theorem}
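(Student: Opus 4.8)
The plan is to treat the two implications separately and to reduce the infinite-dimensional statement to its finite truncations by means of Proposition~\ref{P:infinitedirectsums}. Since $T$ is a diagonalizable normal operator, I would fix an orthonormal eigenbasis $\{e_k\}$ with $Te_k=\lambda_k e_k$ and write $T=\bigoplus_{k} T_k$, where $T_k$ is multiplication by $\lambda_k$ on the one-dimensional space spanned by $e_k$. Necessity I would obtain from coordinate projections together with the fact that convex-cyclicity implies cyclicity; sufficiency I would obtain by showing that each finite diagonal block is convex-transitive and then passing to the infinite direct sum.

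For necessity, first note that a normal operator with a repeated eigenvalue cannot be cyclic: if an eigenvalue had two orthogonal eigenvectors, then the projection of any orbit onto that eigenspace lies in a single line, so the closed span of the orbit cannot fill the eigenspace. Hence the $\lambda_k$ are distinct. Next, let $P_k$ be the orthogonal projection onto $\mathbb{C}e_k$ (resp. $\mathbb{R}e_k$ in the real case). If $x$ is convex-cyclic, then $P_k$ maps the convex hull of $Orb(T,x)$ onto the convex hull of $\{\lambda_k^{\,j}\langle x,e_k\rangle : j\ge 0\}$, which must therefore be dense in $\mathbb{C}$ (resp. $\mathbb{R}$). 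A one-dimensional analysis then forces $|\lambda_k|>1$ and $Im(\lambda_k)\neq 0$ in the complex case, and $\lambda_k<-1$ in the real case (on the line, density fails unless the iterates alternate in sign with growing modulus). In the complex case the spectral restriction may alternatively be read off from Proposition~\ref{propo2} applied to the eigenvalues $\overline{\lambda_k}$ of $T^*$.

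For sufficiency, by Proposition~\ref{P:infinitedirectsums} (with $\mathcal{A}$ the convex polynomials) it suffices to show that each finite block $S_n=\mathrm{diag}(\lambda_1,\dots,\lambda_n)$ has a dense set of convex-cyclic vectors; by Corollary~\ref{C:convex transitive} this is equivalent to convex-transitivity of $S_n$. Using the Hahn-Banach characterization in Corollary~\ref{C:linearfunctionalcriteria}, a vector $x$ with all coordinates nonzero is convex-cyclic for $S_n$ exactly when $\sup_{j} Re\big(\sum_k c_k x_k \lambda_k^{\,j}\big)=\infty$ for every nonzero functional $(c_k)$. Grouping the indices by the value of $|\lambda_k|$, the block $J$ of largest modulus dominates, so everything reduces to showing that the trigonometric sum $\sum_{k\in J}|c_k x_k|\cos(j\theta_k+\gamma_k)$ exceeds a fixed positive constant for infinitely many $j$. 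In the real case the distinct values $\lambda_k<-1$ automatically have distinct moduli, so $J$ is a singleton and the alternating factor $(-1)^j|\lambda_m|^j$ makes the supremum $+\infty$ at once; thus every vector with nonzero coordinates is convex-cyclic and $S_n$ is convex-transitive.

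The hard part will be the complex case when several eigenvalues share the largest modulus, so that $J$ has more than one element and the sum above is a genuinely almost-periodic sequence. I would control it through the closure in a torus of the orbit $\{(j\theta_k)_{k\in J}\bmod 2\pi\}$ and a Weyl/Kronecker equidistribution argument, showing that the mean-zero trigonometric sum attains positive values along a subsequence for a suitably generic coordinate vector $x$, which then forces the supremum to be infinite. The most delicate sub-case, and the step demanding the greatest care, is a pair of complex-conjugate eigenvalues of equal modulus: because convex polynomials have real coefficients one has $p(\overline{\lambda})=\overline{p(\lambda)}$, so the corresponding two coordinates of $p(S_n)x$ are rigidly linked and the positivity of the relevant sum can fail. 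Handling this linkage correctly is where the main obstacle lies, and it is precisely the step that decides which equal-modulus configurations of eigenvalues actually survive.
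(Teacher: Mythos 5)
Your overall architecture is the same as the paper's: necessity via the Hahn--Banach characterization tested on coordinate functionals (the paper uses $f=e_k$ and $f=-i\,\overline{x_k}^{-1}e_k$ rather than orthogonal projections, but the computation is identical), and sufficiency by reducing to the finite diagonal blocks $S_n=\mathrm{diag}(\lambda_1,\dots,\lambda_n)$ and invoking Proposition~\ref{P:infinitedirectsums} together with Corollary~\ref{C:convex transitive}. The difference lies in how the finite-dimensional step is discharged: the paper simply cites Rezaei's characterization of convex-cyclic diagonal matrices on $\mathbb{C}^n$, whereas you attempt to prove it directly by a Kronecker/Weyl equidistribution argument and stop short, explicitly flagging the case of a conjugate pair of equal-modulus eigenvalues as unresolved. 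As written, that is a genuine gap in your proposal: the sufficiency direction of part (a) is not actually proved.

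However, the obstacle you isolated is not a removable technicality --- it is a counterexample, and you should promote it from ``delicate sub-case'' to ``refutation.'' If $\lambda_j=\overline{\lambda_k}$ for some $j\neq k$, then for every convex polynomial $p$ (real coefficients) one has $p(\lambda_k)=\overline{p(\lambda_j)}$, so writing $w=p(\lambda_j)$ the pair of coordinates $\bigl((p(T)x)_j,(p(T)x)_k\bigr)=(wx_j,\overline{w}x_k)$ ranges over a real-linear subspace of $\mathbb{C}^2$ of real dimension at most $2$, which is closed and proper; hence $co(Orb(T,x))$ is never dense. Equivalently, when $x_j,x_k\neq 0$ the functional $f(y)=y_j/x_j-y_k/x_k$ gives $Re\bigl(f(p(T)x)\bigr)=Re(w-\overline{w})=0$, so the criterion of Corollary~\ref{C:linearfunctionalcriteria} fails; the paper's necessity argument misses this because it only tests functionals supported on a single coordinate. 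Consequently the theorem as stated is false in the complex case --- for instance $\mathrm{diag}(2i,-2i)$ is not convex-cyclic, contrary to the remark in Section 3 --- and the correct statement must add the hypothesis $\lambda_j\neq\overline{\lambda_k}$ for all $j\neq k$. Under that extra hypothesis your equidistribution plan for the maximal-modulus block can be carried through (this is essentially the corrected finite-dimensional result later given by Feldman and McGuire). In short: your proof is incomplete, but it is incomplete at exactly the point where the paper's own proof, which rests on Rezaei's finite-dimensional theorem, is also wrong.
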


\begin{proof}  By the spectral theorem we may assume that $T = diag(\lambda_1, \lambda_2, \ldots)$ is an infinite diagonal matrix acting on  $\ell^2_{\mathbb{C}}(\mathbb{N})$ and let $\{e_k\}_{k=1}^\infty$ be the canonical unit vector basis where $e_k$ has a one in its $k^{th}$ coordinate and zeros elsewhere.

(a)    If $T$ is convex-cyclic with convex-cyclic vector $x = (x_n)_{n=1}^\infty \in \ell^2_{\mathbb{C}}(\mathbb{N})$,  then by Corollary~\ref{C:linearfunctionalcriteria} we must have for every $k \geq 1$ that $\infty = \sup_{n \geq 1} Re(\langle T^nx, e_k \rangle ) = \sup_{n \geq 1} Re(\lambda_k^n x_k)$.  This implies that $x_k \neq 0$ and that $|\lambda_k| > 1$  for each $k \geq 1$.  Likewise, since the Hilbert space is complex in this case, we must have
$$
\infty = \sup_{n \geq 1} Re\left(\langle T^nx, \frac{-i}{\overline{x_k}} e_k \rangle \right) = \sup_{n \geq 1} Re\left (\lambda_k^n x_k \frac{i}{x_k} \right) = \sup_{n \geq 1} Re(i \lambda_k^n)\;.
 $$
This implies that $\lambda_k$ cannot be real, hence $Im(\lambda_k) \neq 0$ for all $k \geq 1$.

Conversely, suppose that  for every $k \geq 1$ we have that $| \lambda_k | > 1$  and $Im(\lambda_k) \neq 0$.    Then for $n \geq 1$, let $T_n := diag(\lambda_1, \lambda_2, \ldots, \lambda_n)$ be the diagonal matrix on $\mathbb{C}^n$ where $\lambda_k$ is the $k^{th}$ diagonal entry.  Since the eigenvalues $\{\lambda_k\}_{k=1}^\infty$ are distinct and $| \lambda_k | > 1$  and $Im(\lambda_k) \neq 0$ for $1 \leq k \leq n$, then we know from Rezaei~\cite{Rezaei} that $T_n$ is convex-cyclic on $\mathbb{C}^n$ and that every vector all of whose coordinates are non-zero is a convex-cyclic vector for $T_n$.  Since such vectors are dense in $\mathbb{C}^n$  for every $n \geq 1$, then it follows from Proposition~\ref{P:infinitedirectsums} that $T$ is also convex-cyclic and has a dense set of convex-cyclic vectors.  (b) The proof of the real case is similar to that above.
\end{proof}

The next theorem says that if an operator has a complete set of eigenvectors whose eigenvalues are distinct, not real, and lie outside of the closed unit disk, then the operator is convex-cyclic.

\begin{theorem}  \label{T:dense set of eigenvectors}
Let $S := \{re^{i\theta} : r > 1 \text{ and } 0 < |\theta| < \pi \} = \mathbb{C} \setminus (\overline{\mathbb{D}} \cup \mathbb{R})$.  Suppose that $T$ is a bounded linear operator on a complex Banach space $X$ and that $T$  has a countable linearly independent set of eigenvectors with dense linear span in $X$ such that the corresponding eigenvalues are distinct and are contained in the set $S$.  Then $T$ is convex-cyclic and has a dense set of convex-cyclic vectors.
\end{theorem}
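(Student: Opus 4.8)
The plan is to prove convex-transitivity and then invoke Corollary~\ref{C:convex transitive}, which will deliver not merely a single convex-cyclic vector but a dense $G_\delta$ set of them, matching the statement. So I want to show that for any two nonempty open sets $U,V\subseteq X$ there is a convex polynomial $p$ with $p(T)U\cap V\neq\emptyset$. The entire difficulty will be funneled into a finite-dimensional interpolation fact, which is exactly Rezaei's characterization of convex-cyclic diagonal matrices on $\mathbb{C}^n$ already used in the proof of Theorem~\ref{T:diagonalnormaloperators}: a diagonal matrix $\mathrm{diag}(\lambda_1,\ldots,\lambda_n)$ whose eigenvalues are distinct and lie in $S$ is convex-cyclic on $\mathbb{C}^n$, and moreover every vector all of whose coordinates are nonzero is a convex-cyclic vector for it.

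First I would reduce to finite combinations of eigenvectors. Write $\{f_k\}_{k\geq 1}$ for the given eigenvectors, $Tf_k=\lambda_k f_k$, with the $\lambda_k$ distinct and contained in $S$. Since the $\lambda_k$ are distinct, the $f_k$ are automatically linearly independent, and by hypothesis their linear span is dense in $X$. Hence, given nonempty open sets $U,V$, I can choose $u_0\in U$ and $v\in V$ that are \emph{finite} linear combinations of the $f_k$; enlarging the index set if necessary, I may assume both lie in the finite-dimensional subspace $M:=\mathrm{span}\{f_1,\ldots,f_n\}$ for a common $n$.

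The subspace $M$ is $T$-invariant, and in the basis $f_1,\ldots,f_n$ the restriction $T|_M$ is represented, via the coordinate isomorphism $M\cong\mathbb{C}^n$, by the diagonal matrix $\mathrm{diag}(\lambda_1,\ldots,\lambda_n)$. Because the $\lambda_k$ are distinct and lie in $S=\mathbb{C}\setminus(\overline{\mathbb{D}}\cup\mathbb{R})$, Rezaei's finite-dimensional result tells me that $T|_M$ is convex-cyclic on $M$ and that its convex-cyclic vectors (those with all coordinates nonzero) are dense in $M$. The set $U\cap M$ is a nonempty open subset of $M$ (it contains $u_0$), so by this density I can select a convex-cyclic vector $u$ for $T|_M$ with $u\in U\cap M$. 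Then $\overline{co(Orb(T|_M,u))}=M$, and since $v\in V\cap M$ with $V$ open, there is a convex polynomial $p$ with $p(T)u=p(T|_M)u\in V$. As $u\in U$, this yields $p(T)U\cap V\neq\emptyset$, establishing convex-transitivity; Corollary~\ref{C:convex transitive} then gives the conclusion.

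I expect the main obstacle to be organizational rather than deep: the crux is recognizing that the required finite-dimensional interpolation by convex polynomials \emph{is} Rezaei's theorem, and that a single invariant subspace $M$ containing both $u_0$ and $v$ can be used. Two points deserve care. First, one needs that the convex-cyclic vectors of $T|_M$ are \emph{dense} in $M$, so that the open set $U\cap M$ actually contains one — this is precisely where the hypotheses that the eigenvalues are distinct with $|\lambda_k|>1$ and $\mathrm{Im}(\lambda_k)\neq 0$ enter, through Rezaei's characterization. Second, the $T$-invariance of $M$ is what makes $p(T)u=p(T|_M)u$, so that the finite-dimensional approximation transfers verbatim to $X$ without any control on how $T$ behaves off $M$.
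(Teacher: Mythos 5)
Your proof is correct, but it takes a genuinely different route from the paper's. The paper rescales the eigenvectors so that $\sum_n\|v_n\|^2<\infty$, builds a bounded operator $A:\ell^2(\mathbb{N})\to X$ by $A(\{a_n\})=\sum_n a_n v_n$, checks that $A$ has dense range and intertwines the infinite diagonal operator $D=\mathrm{diag}(\lambda_1,\lambda_2,\ldots)$ with $T$ (that is, $AD=TA$), and then transfers the convex-cyclic vectors of $D$ (supplied by Theorem~\ref{T:diagonalnormaloperators}) through $A$. You instead verify convex-transitivity directly: both test vectors are pushed into a finite-dimensional $T$-invariant subspace spanned by eigenvectors, on which $T$ acts as a diagonal matrix with distinct eigenvalues in $S$, Rezaei's finite-dimensional characterization supplies the convex polynomial, and Corollary~\ref{C:convex transitive} converts transitivity into a dense $G_\delta$ set of convex-cyclic vectors. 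Both arguments bottom out in the same finite-dimensional fact, but yours avoids constructing the quasi-factor $A$ (and hence the renormalization $\sum_n\|v_n\|^2<\infty$) and does not need the infinite-dimensional diagonal theorem at all, which makes it more elementary; the paper's version buys an explicit intertwining mechanism and exhibits concrete convex-cyclic vectors for $T$ as images $Ax$ of convex-cyclic vectors $x$ for $D$, rather than obtaining existence by Baire category. One small point worth recording in your write-up: the category argument behind Corollary~\ref{C:convex transitive} requires $X$ to be separable, which here is automatic because the countable set of eigenvectors has dense linear span.
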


\begin{proof}
Suppose that $\{v_n\}_{n=1}^\infty$ is a linearly independent set of eigenvectors for $T$ that have dense linear span in $X$ and such that the corresponding eigenvalues $\{\lambda_n\}_{n=1}^\infty$ are distinct and contained in the set $S$.  By replacing each eigenvector $v_n$ with a constant multiple of itself we may assume that $\sum_{n=1}^\infty \|v_n\|^2 < \infty$.  Let $D$ be the diagonal normal matrix on $\ell^2(\mathbb{N})$ whose $n^{th}$ diagonal entry is $\lambda_n$.  Then define a linear map $A:\ell^2(\mathbb{N}) \to X$ by $A(\{a_n\}_{n=1}^\infty) = \sum_{n=1}^\infty a_n v_n$.  Notice that since $\{a_n\}_{n=1}^\infty \in \ell^2(\mathbb{N})$, then we have that
$$\|A(\{a_n\}_{n=1}^\infty)\| =   \left \|\sum_{n=1}^\infty a_n v_n \right \| \leq \left( \sum_{n=1}^\infty |a_n|^2 \right)^{1/2} \left ( \sum_{n=1}^\infty \|v_n\|^2 \right)^{1/2} = C \|\{a_n\}_{n=1}^{\infty} \|_{\ell^2(\mathbb{N})}$$
where $C := ( \sum_{n=1}^\infty \|v_n\|^2 )^{1/2}$, which is finite.  The above inequality implies that $A$ is a well defined continuous linear map from $\ell^2(\mathbb{N})$ to $X$.  It follows that since the eigenvectors $\{v_n\}_{n=1}^\infty$ have dense linear span in $X$, that $A$ has dense range.    Also, if $\{e_n\}_{n=1}^\infty$ is the standard unit vector basis in $\ell^2(\mathbb{N})$, then clearly $A(e_n) = v_n$ for all $n \geq 1$ and thus
$A$ intertwines $D$ with $T$.  To see this notice that  $AD(e_n) = A(\lambda_n e_n) = \lambda_n v_n = T(v_n) = TA(e_n)$.  Thus $AD(e_n) = TA(e_n)$ for all $n \geq 1$, thus $AD = TA$.
Finally, since $D$ has distinct eigenvalues that all lie in the set $S$, it follows from Proposition~\ref{T:diagonalnormaloperators}  that $D$ is convex-cyclic and has a dense set of convex-cyclic vectors.  Since $A$ intertwines $D$ and $T$ and $A$ has dense range, then $A$ will map convex-cyclic vectors for $D$ to convex-cyclic vectors for $T$.  Thus, $T$ is convex-cyclic and has a dense set of convex-cyclic vectors.
\end{proof}

If $G$ is an open set in the complex plane, then by a reproducing kernel Hilbert space $\mathcal{H}$ of analytic functions on $G$ we mean a vector space of analytic functions on $G$ that is complete with respect to a norm given by an inner product and such that point evaluations at all points in $G$ are continuous linear functionals on $\mathcal{H}$.  Naturally we also require that $f = 0$ in $\mathcal{H}$ if and only if $f(z) = 0$ for all $z \in G$.   This is equivalent to the reproducing kernels having dense linear span in $\mathcal{H}$.   Given such a space $\mathcal{H}$, a multiplier of $\mathcal{H}$ is an analytic function $\varphi$ on $G$ so that $\varphi f \in \mathcal{H}$ for every $f \in \mathcal{H}$.  In this case, the closed graph theorem implies that the multiplication operator $M_\varphi:\mathcal{H} \to \mathcal{H}$ is a bounded linear operator.

\begin{corollary}  \label{C:adjoint multipliers}
Suppose that $G$ is an open set in $\mathbb{C}$ with components $\{G_n\}_{n \in J}$ and $\mathcal{H}$ is a reproducing kernel Hilbert space of analytic functions on $G$, and that $\varphi$ is a multiplier of $\mathcal{H}$.  If $\varphi$ is non-constant on every component of $G$ and $\varphi(G_n)  \cap \{z \in \mathbb{C} : |z| > 1\} \neq \emptyset$ for every $n \in J$, then the operator $M^*_\varphi$ is convex-cyclic on $\mathcal{H}$ and has a dense set of convex-cyclic vectors.
\end{corollary}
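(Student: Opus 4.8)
The plan is to realize $M^*_\varphi$ as an operator satisfying the hypotheses of Theorem \ref{T:dense set of eigenvectors}, using that the eigenvectors of $M^*_\varphi$ are the reproducing kernels of $\mathcal{H}$. First I would record the standard computation that for each $w \in G$ the reproducing kernel $k_w$ satisfies $M^*_\varphi k_w = \overline{\varphi(w)}\, k_w$: indeed, for every $f \in \mathcal{H}$ one has $\langle f, M^*_\varphi k_w\rangle = \langle \varphi f, k_w\rangle = (\varphi f)(w) = \varphi(w) f(w) = \langle f, \overline{\varphi(w)}\, k_w\rangle$. Thus each $k_w$ is an eigenvector of $M^*_\varphi$ with eigenvalue $\overline{\varphi(w)}$. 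Since the set $S = \mathbb{C}\setminus(\overline{\mathbb{D}}\cup\mathbb{R})$ is invariant under complex conjugation, it suffices to find points $w$ with $\varphi(w) \in S$, i.e. with $|\varphi(w)| > 1$ and $\operatorname{Im}\varphi(w) \neq 0$, and then to arrange that the selected kernels are linearly independent with dense span and have distinct eigenvalues.

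Next I would show that in each component $G_n$ the set $V_n := \{w \in G_n : |\varphi(w)| > 1 \text{ and } \operatorname{Im}\varphi(w) \neq 0\}$ is a nonempty open subset of $G_n$. Openness is immediate from the continuity of $\varphi$. The set $\{w \in G_n : |\varphi(w)| > 1\}$ is nonempty by the hypothesis $\varphi(G_n)\cap\{z : |z|>1\}\neq\emptyset$, and it cannot be contained in the real-value set $\{w \in G_n : \operatorname{Im}\varphi(w) = 0\}$, since the latter has empty interior: a non-constant analytic function is an open map and so cannot take values in the line $\mathbb{R}$ on any open set. Hence $V_n$ is nonempty.

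With the sets $V_n$ in hand I would select the eigenvectors. In each $V_n$ I pick a point $a_n$ with $\varphi'(a_n)\neq 0$ (possible because the critical points of the non-constant $\varphi$ are isolated) and shrink to an open disk $D_n \subseteq V_n$ on which $\varphi$ is injective. Since $G$ is open in $\mathbb{C}$ the index set $J$ is countable, so I can enumerate all pairs $(n,j)$ with $n \in J$, $j \in \mathbb{N}$, and choose points one at a time: at each step I pick $w_{n,j} \in D_n$ within distance $1/j$ of $a_n$ whose $\varphi$-value differs from the finitely many values already chosen. This is possible because injectivity of $\varphi$ on $D_n$ means each value is attained at most once there, so only finitely many points of $D_n$ must be avoided (I would also avoid the discrete common-zero set of $\mathcal{H}$, so that each $k_{w_{n,j}}$ is genuinely nonzero). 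The resulting countable family $E = \{w_{n,j}\}$ then has distinct eigenvalues $\overline{\varphi(w_{n,j})} \in S$, and the kernels $\{k_{w_{n,j}}\}$ are linearly independent since eigenvectors for distinct eigenvalues are independent.

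Finally I would verify the dense-span condition and conclude. As $w_{n,j}\to a_n$ inside $G_n$, the set $E$ accumulates at an interior point of every component; hence any $f \in \mathcal{H}$ vanishing on all of $E$ vanishes on each component by the identity theorem and is therefore the zero function, which forces $\overline{\operatorname{span}}\{k_w : w \in E\} = \mathcal{H}$. All hypotheses of Theorem \ref{T:dense set of eigenvectors} are then met, so $M^*_\varphi$ is convex-cyclic and has a dense set of convex-cyclic vectors. I expect the main obstacle to be the bookkeeping in the selection step: arranging all eigenvalues across the different components to be simultaneously distinct while still keeping an accumulation point of $E$ inside every component $G_n$, which is what the one-at-a-time enumeration using the local injectivity of $\varphi$ is designed to handle.
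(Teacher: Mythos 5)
Your proposal is correct and follows essentially the same route as the paper: reproducing kernels as eigenvectors of $M^*_\varphi$, the open mapping theorem to produce a nonempty open set in each component where $\varphi(\lambda)$ lies in $\mathbb{C}\setminus(\overline{\mathbb{D}}\cup\mathbb{R})$, selection of a countable family with distinct eigenvalues and dense span, and then Theorem~\ref{T:dense set of eigenvectors}. Your selection step (local injectivity near a non-critical point plus a one-at-a-time enumeration) merely fills in the detail that the paper compresses into ``choose $\{\lambda_{n,k}\}$ with an accumulation point in $E_n$ so that $\varphi$ is one-to-one on it.''
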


\begin{proof}
We will show that the eigenvectors for $M_\varphi^*$ with eigenvalues in the set $S = \mathbb{C} \setminus (\overline{\mathbb{D}} \cup \mathbb{R})$ have dense linear span in $\mathcal{H}$.  It will then follow from Theorem~\ref{T:dense set of eigenvectors} that $M_{\varphi}^*$ is convex-cyclic.

Every reproducing kernel for $\mathcal{H}$ is an eigenvector for $M_\varphi^*$.  In fact, if $\lambda \in G$, then $M^*_\varphi K_\lambda = \overline{\varphi(\lambda)} K_\lambda$, where $K_\lambda$ denotes the reproducing kernel for $\mathcal{H}$ at the point $\lambda \in G$.  By assumption, for every component $G_n$ of $G$, $\varphi$ is non-constant on $G_n$, thus the set $\{\lambda \in G_n : |\varphi(\lambda)| > 1\}$ is a nonempty open subset of $G_n$. Also since $\varphi$ is an open map on $G_n$,  $\varphi$ cannot map the open set $\{\lambda \in G_n : |\varphi(\lambda)| > 1\}$ into $\mathbb{R}$.  Thus, for all $n \in J$, $E_n = \{\lambda \in G_n : |\varphi(\lambda)| > 1 \text{ and } \varphi(\lambda) \notin \mathbb{R}\}$ is a nonempty open subset of $G_n$.  Let $E: = \bigcup_{n \in J} E_n$.  Then for every $\lambda \in E$, $K_\lambda$ is an eigenvector for $M_\varphi^*$ with eigenvalue $\overline{\varphi(\lambda)}$ which lies in $S = \mathbb{C} \setminus (\overline{\mathbb{D}} \cup \mathbb{R})$.  Since $E \cap G_n$ is a nonempty open set for every $n \in J$, then the corresponding reproducing kernels $\{K_\lambda : \lambda \in E\}$ have dense linear span in $\mathcal{H}$.    Finally, since $\varphi$ is non-constant on $E_n$ for each $n \in J$,  we can choose a countable set $\{\lambda_{n,k}\}_{k=1}^\infty$ in $E_n$ that has an accumulation point in $E_n$ in such a way that $\varphi$ is one-to-one on $\{\lambda_{n,k}\}_{n,k=1}^\infty$.  Then the countable set $\{K_{\lambda_{n,k}}\}_{n,k=1}^\infty$ is a set of independent eigenvectors with dense linear span in $\mathcal{H}$ and with distinct eigenvalues.
It now follows from Theorem~\ref{T:dense set of eigenvectors} that $M_\varphi^*$ is convex-cyclic and has a dense set of convex-cyclic vectors.
\end{proof}

\begin{remark}
In the previous corollary, if $G$ is an open {connected} set, $\varphi$ is a non-cons\-tant\-mul\-ti\-plier of $\mathcal{H}$ and if the norm of $M_\varphi$ is {equal} to its spectral radius, then $M_\varphi^*$ is convex-cyclic {if and only if}   $\varphi(G) \cap \{z \in \mathbb{C} : |z| > 1\} \neq \emptyset$.  This is the case if $\mathcal{H}$ is equal to $H^2(G)$ or $L^2_a(G)$, the Hardy space or Bergman space on $G$ or if $M_\varphi$ is hyponormal.
\end{remark}

 Next we give an example of a convex-cyclic operator that  is not 1-weakly hypercyclic.

\begin{example}
{\rm  Let $M^*_{2+z}$ be the adjoint of the multiplication operator associated to the multiplier $\varphi (z):=2+z$ on $H^2(\mathbb{D})$.  By \cite[Theorem 5.5]{shkarin2} the operator $M^*_{2+z}=2I+B$, where $B$ is the unilateral backward shift, is not 1-weakly-hypercyclic, however $M^*_{2+z}$ is convex-cyclic by Corollary \ref{C:adjoint multipliers}.}
\end{example}

The following result is true since powers of convex polynomials are also convex polynomials.

\begin{proposition}
If $T$ is an operator on a Banach space and there exists a convex polynomial $p$ such that $p(T)$ is hypercyclic, then $T$ is convex-cyclic.
\end{proposition}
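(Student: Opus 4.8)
The plan is to exploit the identity $p(T)^n = p^n(T)$ together with the fact that convex polynomials are closed under multiplication, which is precisely the observation to which the statement alludes. First I would let $x$ be a hypercyclic vector for $p(T)$, so that the orbit $\{ p(T)^n x : n \geq 0 \}$ is dense in $X$. The goal is to show that this same $x$ is a convex-cyclic vector for $T$; by the description $co(Orb(T,x)) = \{ q(T)x : q \text{ is a convex polynomial} \}$ recalled in the Introduction, it suffices to produce, for each $n$, a convex polynomial $q_n$ with $q_n(T)x = p(T)^n x$.

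The key step is the verification that if $p$ is a convex polynomial then so is $p^n$ for every $n \geq 0$. Write $p(z) = a_0 + a_1 z + \cdots + a_m z^m$ with $a_k \geq 0$ and $\sum_k a_k = 1$. A product of two polynomials with nonnegative coefficients again has nonnegative coefficients, so by induction $p^n$ has nonnegative coefficients. Moreover, the sum of the coefficients of any polynomial $q$ equals $q(1)$, and since $p(1) = \sum_k a_k = 1$ we obtain $p^n(1) = p(1)^n = 1$. Hence the coefficients of $p^n$ are nonnegative and sum to $1$, that is, $p^n$ is a convex polynomial.

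Setting $q_n := p^n$, the standard polynomial functional calculus gives $q_n(T)x = p(T)^n x$ for every $n \geq 0$, so the dense set $\{ p(T)^n x : n \geq 0 \}$ is contained in $\{ q(T)x : q \text{ is a convex polynomial} \} = co(Orb(T,x))$. Therefore $co(Orb(T,x))$ is dense in $X$, the vector $x$ is a convex-cyclic vector for $T$, and consequently $T$ is convex-cyclic.

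There is essentially no serious obstacle here: the only point requiring genuine care is the closure of convex polynomials under products, and even that reduces to the elementary remark that evaluating at $z = 1$ computes the coefficient sum, forcing $p^n(1) = 1$. I note that since the orbit of $x$ under $p(T)$ is already dense, the density of $co(Orb(T,x))$ is immediate and no appeal to the Hahn-Banach characterization (Corollary~\ref{C:linearfunctionalcriteria}) is needed, although one could equivalently route the conclusion through that criterion.
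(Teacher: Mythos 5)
Your proof is correct and follows exactly the route the paper indicates: the paper justifies this proposition with the single remark that powers of convex polynomials are again convex polynomials, which is precisely the lemma you verify (nonnegativity of coefficients is preserved under products, and $p^n(1)=p(1)^n=1$ gives the coefficient sum). Your write-up simply supplies the details the paper omits, so there is nothing to add.
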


By a \textit{region} in $\mathbb{C}$ we mean an open connected set in $\mathbb{C}$.  In the following theorem, we consider the operator which is the adjoint of multiplication by $z$, the independent variable.

\begin{theorem}
Suppose that $G$ is a bounded region in $\mathbb{C}$ and $G \cap \{ z : |z| > 1\} \neq \emptyset$.  Suppose also that $\mathcal{H}$ is a reproducing kernel Hilbert space of analytic functions on $G$, then $M_z^*$ is convex-cyclic on $\mathcal{H}$. In fact, there exists a convex polynomial $p$ such that $p(M_z^*)$ is hypercyclic on $\mathcal{H}$.
\end{theorem}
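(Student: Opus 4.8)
The first assertion—that $M_z^*$ is convex-cyclic—is already covered by Corollary~\ref{C:adjoint multipliers} applied to the multiplier $\varphi(z)=z$, since $G$ is a region (hence a single component), $z$ is non-constant, and $G\cap\{z:|z|>1\}\neq\emptyset$. So the real content is the second, stronger statement, and once a convex polynomial $p$ with $p(M_z^*)$ hypercyclic is produced we are finished (convex-cyclicity also follows from the Proposition immediately preceding this theorem). The plan is to produce such a $p$ through the Godefroy--Shapiro eigenvalue criterion. Recall that $p(M_z^*)=M_{p(z)}^*$ and that every reproducing kernel $K_\lambda$ ($\lambda\in G$) is an eigenvector, $M_{p(z)}^*K_\lambda=\overline{p(\lambda)}K_\lambda$; note also that $\mathcal H$ is separable, since the kernels at a countable dense subset of $G$ already have dense span.

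By the Godefroy--Shapiro theorem (see \cite{Grosse-Peris}), $M_{p(z)}^*$ is hypercyclic as soon as the two families $\{K_\lambda:\lambda\in G,\ |p(\lambda)|<1\}$ and $\{K_\lambda:\lambda\in G,\ |p(\lambda)|>1\}$ each span a dense subspace of $\mathcal H$. Since $G$ is connected and $\mathcal H$ consists of analytic functions, any family $\{K_\lambda:\lambda\in A\}$ with $A$ a nonempty open subset of $G$ has dense span: if $f\perp K_\lambda$ for all $\lambda\in A$ then $f$ vanishes on $A$, hence on all of $G$ by the identity theorem, so $f=0$. Because a non-constant $p$ is an open map, both sets $\{\lambda\in G:|p(\lambda)|<1\}$ and $\{\lambda\in G:|p(\lambda)|>1\}$ are open. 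Thus the task reduces to finding a convex polynomial $p$ whose image $p(G)$ meets both $\mathbb{D}$ and $\mathbb{C}\setminus\overline{\mathbb{D}}$.

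To build $p$, I would first fix points $w,z_0\in G$ with $1<|w|<|z_0|$ and with $w\notin(0,\infty)$; these exist because $|\cdot|$ is non-constant on the open connected set $G$, its range meets $(1,\infty)$ in a nondegenerate interval, and each level set $\{|z|=r\}\cap G$ is a nonempty relatively open subset of a circle (so we may avoid the single positive-real point on it). Then I would look for $p$ of the two-term form $p(z)=(1-a)+az^{n}$, which is convex for $a\in(0,1)$. The requirement $|p(z_0)|>1$ holds whenever $a>2/(|z_0|^{n}-1)$, while the requirement $|p(w)|<1$, i.e. $|1+a(w^{n}-1)|<1$, holds exactly when $0<a<2\,(1-\mathrm{Re}(w^{n}))/|w^{n}-1|^{2}$. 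The key is to choose $n$ making these two constraints on $a$ compatible. Since $w$ is not a positive real, there are arbitrarily large $n$ with $\cos(n\arg w)\le -c$ for some fixed $c>0$ (by equidistribution when $\arg w/2\pi$ is irrational, and by periodic recurrence of a negative value when it is rational). For such $n$ one has $1-\mathrm{Re}(w^{n})\gtrsim c|w|^{n}$ and $|w^{n}-1|^{2}\sim|w|^{2n}$, so the upper bound on $a$ is $\gtrsim c|w|^{-n}$ while the lower bound is $\sim 2|z_0|^{-n}$; since $|z_0|>|w|$ the quantity $c(|z_0|/|w|)^{n}\to\infty$, so for $n$ large the admissible interval for $a$ is nonempty. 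Any $a$ in it gives a convex $p$ with $|p(w)|<1<|p(z_0)|$, as required.

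The hard part is exactly this last step: a single convex polynomial must be simultaneously small at one point of $G$ and large at another, and for convex polynomials the constant and leading coefficients pull against each other. The resolution is the strict modulus gap $|w|<|z_0|$, which lets the single high-degree term $z^{n}$ be amplified by the factor $(|z_0|/|w|)^{n}$ at $z_0$ while a suitable rotation (the negativity of $\cos(n\arg w)$) makes it cancel against the constant term at $w$. With $p$ in hand, the two eigenvalue families above have dense span, the Godefroy--Shapiro criterion shows $p(M_z^*)$ is hypercyclic, and therefore $M_z^*$ is convex-cyclic.
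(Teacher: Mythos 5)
Your proof is correct, and at the top level it runs parallel to the paper's: both arguments manufacture a two-term convex polynomial $p(z)=(1-a)+az^{n}$ whose image $p(G)$ straddles the unit circle, and then deduce that $p(M_z^*)=M_p^*$ is hypercyclic from the density of the reproducing kernels with $|p(\lambda)|<1$ and with $|p(\lambda)|>1$ (the paper leaves this Godefroy--Shapiro step implicit once it has $p(G)\cap\partial\mathbb{D}\neq\emptyset$; you spell it out, including the separability remark, which is a plus). Where you genuinely diverge is in how $n$ and $a$ are found. The paper's argument is geometric: it takes a polar rectangle $R=\{re^{i\theta}: r_1<r<r_2,\ \alpha<\theta<\beta\}$ in $G\cap\{|z|>1\}$, chooses $n$ with $n(\beta-\alpha)>2\pi$ so that $R^{n}$ contains a full annulus and hence reaches the half-plane $\{\,z: \mathrm{Re}(z)<1\,\}$, and then picks $a$ so that the circle $p_a^{-1}(\partial\mathbb{D})=\partial B(\tfrac{a-1}{a},\tfrac{1}{a})$ --- these circles sweep out that half-plane --- meets $R^{n}$, giving $p(G)\cap\partial\mathbb{D}\neq\emptyset$ directly. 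Your argument is quantitative and two-pointed: a modulus gap $1<|w|<|z_0|$ with $w\notin(0,\infty)$, a residue class of $n$ along which $\cos(n\arg w)\le -c$ (your case split between $\arg w/2\pi$ rational and irrational is sound), and an explicit nonempty interval of admissible $a$ coming from $(|z_0|/|w|)^{n}\to\infty$. Both work; the paper's route avoids the recurrence argument for $n\arg w$ modulo $2\pi$, while yours isolates exactly why a single convex polynomial can be simultaneously small and large on $G$ and uses only two points rather than a polar rectangle. One small caution: the identity $p(M_z^*)=M_{p}^*$ that you recall as a general fact actually reads $p(M_z^*)=M_{p^{\#}}^*$ with $p^{\#}(z)=\overline{p(\overline z)}$; it reduces to $M_p^*$ precisely because a convex polynomial has real coefficients, a point the paper makes explicitly and that you should too, although it costs you nothing here.
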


\begin{proof}
Choose $n \geq 1$ such that $G^n := \{z^n : z \in G\}$ satisfies $G^n \cap \{z \in \mathbb{C} : Re(z) < 1\} \neq \emptyset$.   To see how to do this, choose a polar rectangle $R = \{r e^{i\theta} : r_1 < r < r_2 \text{ and }\alpha < \theta < \beta \}$ such that $R \subseteq G$.  Then simply choose a positive integer $n$ such that $n(\beta - \alpha) > 2\pi$.  Then $R^n \subseteq G^n$ and $R^n$ will contain the annulus $\{re^{i\theta} : r_1^n < r < r_2^n\}$, so certainly $G^n \cap \{z \in \mathbb{C} : Re(z) < 1\} \neq \emptyset$.   Now if $0 < a \leq 1$, then the convex polynomial $p_a(z) = az + (1-a)$ maps the disk $B(\frac{a-1}{a},\frac{1}{a})$ onto the unit disk.  Notice that the family of disks $\{B(\frac{a-1}{a},\frac{1}{a}) : 0 < a < 1\}$ is the family of all disks that are centered on the negative real axis and pass through the point $z = 1$.   Thus it follows that $\{z \in \mathbb{C} : Re(z) < 1 \} = \bigcup_{0 < a < 1} B(\frac{a-1}{a},\frac{1}{a})$.
So we can choose an $a \in (0,1)$ such that $G^n \cap \partial B(\frac{a-1}{a},\frac{1}{a}) \neq \emptyset$.
It follows that the polynomial $p(z) = p_a(z^n)$ is a convex polynomial and furthermore it satisfies $p(G) \cap \partial \mathbb{D} \neq \emptyset$.

Thus $M_p^*$ is hypercyclic on $\mathcal{H}$.  However, $M_p^* = p^\#(M_z^*)$ where $p^\#(z) = \overline{p(\overline{z})}$.  Also, since $p$ is a convex polynomial, all of its coefficients are real, thus $p^\# = p$.   Thus, $p(M_z^*) = p^\#(M_z^*) = M_p^*$ is hypercyclic on $\mathcal{H}$.
\end{proof}

In the next result we give an example of an operator that is convex-cyclic but no convex polynomial of the operator is hypercyclic.  In other words, the operator is purely convex-cyclic.

\begin{example}
Let $\{\alpha_n\}_{n=1}^\infty$ and $\{\beta_n\}_{n=1}^\infty$ be two strictly decreasing sequences of positive numbers that are interlaced and converging to zero.  In other words, $0 < \alpha_{n+1} < \beta_{n+1} < \alpha_n$ for all $n \geq 1$ and $\alpha_n \to 0$ (and hence $\beta_n \to 0$).
For each $n \geq 1$, let $$G_n: = \{re^{i\theta} : 2 < r < 2 + \frac{1}{n} \text{ and } \alpha_n < \theta < \beta_n\}.$$
Let $G := \bigcup_{n=1}^\infty G_n$ and let $L^2_a(G)$ be the Bergman space of all analytic functions on $G$ that are square integrable with respect to area measure on $G$.  Then the operator $M_z^*$ is purely convex-cyclic on $L^2_a(G)$; meaning that $M_z^*$ is convex-cyclic on $L^2_a(G)$, but  $p(M_z^*)$ is not hypercyclic on $L^2_a(G)$ for any convex polynomial $p$.
\end{example}

\begin{proof}
By Corollary~\ref{C:adjoint multipliers} we know that $M_z^*$ is convex-cyclic on $L^2_a(G)$.  In order to show that no convex polynomial of $M_z^*$ is hypercyclic, suppose, by way of contradiction, that there exists a convex polynomial $p$ such that $p(M_z^*)$  is hypercyclic.  Since $p$ is a convex polynomial it has real coefficients thus $p^{\#}(z) = p(z)$ where $p^{\#}(z): = \overline{p(\overline{z})}$.  Thus  $p(M_z^*) = M_{p^\#}^* = M_p^*$ and it follows that $M_p^*$ is hypercyclic on $L^2_a(G)$.  Thus it follows that every component $G_n$ of $G$ must satisfy that $p(G_n) \cap \partial \mathbb{D} \neq \emptyset$.  However since $p$ is a convex polynomial, $p$ is (strictly) increasing on the interval $[0, \infty)$.  Thus, $p(2) > p(1) = 1$.  Choose an $\varepsilon > 0$ such that $\varepsilon < p(2) - 1$.  Since $p$ is continuous at $z = 2$, and since we have an $\varepsilon > 0$, then there exists a $\delta > 0$ such that if $|z-2| < \delta$, then $|p(z) - p(2)| < \varepsilon$.  Notice that for $n$ sufficiently large we have that $G_n \subseteq B(2, \delta)$, thus, $p(G_n) \subseteq B(p(2),\varepsilon) \subseteq \{z \in \mathbb{C} : Re(z) > 1 \}$.  Thus, $p(G_n) \cap \partial \mathbb{D} = \emptyset$ for all large $n$, a contradiction.    It follows that no convex polynomial of $M_z^*$ is hypercyclic, hence $M_z^*$ is purely convex-cyclic.
\end{proof}

%%%%%%%%%%%%%%
\section{Open Questions}
%%%%%%%%%%%%%%

It is well known that hypercyclic operators have a dense set of hypercyclic vectors. In fact, the set of hypercyclicic vectors is a dense $G_{\delta}$ set.

\setcounter{question}{0}

\begin{question}
If $T$ is convex-cyclic, then does $T$ have a dense set of  convex-cyclic vectors?
\end{question}

Sanders \cite{Sanders05} proved that if $T:H\rightarrow H$ is  a hyponormal operator on a Hilbert space $H$, then $T$ is not weakly hypercyclic. A hyponormal operator is \emph{pure} if its restriction to any of its reducing subspaces is not normal.  That is, a hyponormal operator $T$ is pure if $T$ cannot be written in the form $T = S \oplus N$ where $N$ is a normal operator.

\begin{question}
{\rm Are there pure hyponormal operators or continuous normal operators that are convex-cyclic?}
\end{question}

\begin{question}
If $T$ is convex-cyclic on a complex Hilbert space, then is $(-1)T$ also convex-cyclic?
\end{question}

The above question is true for diagonal normal operators/matrices and the other examples in this paper and also whenever $T^2$ is convex-cyclic.

\begin{question}
{\rm If $T$ is a convex-cyclic operator, then how big can the point spectrum of $T^*$ be?  Can it have non-empty interior?
}
\end{question}

Bourdon and Feldman \cite{BoFe} showed that  if a vector $x\in X$   has a somewhere dense  orbit under a bounded linear operator $T$, then the orbit of $x$ under $T$ must be everywhere dense in $X$. A similar question was posed  for convex-cyclicity by Rezaei. Recently, Le\'{o}n-Saavedra and Romero de la Rosa provide an example where Bourdon and Feldman's result fails for convex-cyclic operators $T$ such that $\sigma_p(T^*)\neq \varnothing$.

\begin{question}\cite[Question 5.5]{Rezaei}
{\rm Let $X$ be a Banach space and $T\in L(X)$  where $\sigma_p(T^*)= \emptyset$. If $x \in X$ and $co(Orb(T,x))$ is somewhere dense in $X$, then is $co(Orb(T,x))$ dense in $X$?
}
\end{question}

Since it is unknown if there exists a Banach space on which every hypercyclic operator is weakly mixing, we ask:

\begin{question}
Given a separable Banach space $X$, is there a convex-cyclic operator $S$ on $X$ such that $S^2$ is not convex-cyclic?
\end{question}

{\bf Ackmowledgements:}
The first and second  authors are partially supported by grant of Ministerio  de Ciencia e Innovaci\'{o}n, Spain, proyect no. MTM2011-26538.


\begin{thebibliography}{99}
\bibitem{agler} J. Agler, A disconjugacy theorem for Toeplitz operators. Amer. J. Math. \textbf{112} (1990) 1-14.


\bibitem{Ansari} S. Ansari, Hypercyclic and cyclic vectors. J. Funct. Anal. \textbf{128} (1995), no. 2, 374-383.

\bibitem{BaGrMu}  C. Badea, S. Grivaux, V. M\"{u}ller, Epsilon-hypercyclic operators. Ergodic Theory Dynam. Systems \textbf{30 } (2010), no. 6, 1597-1606.


\bibitem{Bayart} F. Bayart, $m$-isometries on Banach spaces, Math. Nachr. {\bf 284}No 17-18,(2011) 2141-2147.

\bibitem{Bayart-1} F. Bayart, Epsilon-hypercyclic operators on a Hilbert space.
 Proc. Amer. Math. Soc. \textbf{138} (2010), no. 11, 4037–4043.

\bibitem{Bayart-Costakis} F. Bayart, G. Costakis, Cyclic operators with finite support, Israel J. of Math., {\bf193}(2013), 131-167.

\bibitem{Bayart-Matheron1} F. Bayart, E. Matheron, Hypercyclic operators failing the hypercyclicity criterion on classical Banach spaces. J. Funct. Anal. \textbf{250} (2007), no. 2, 426-441.

\bibitem{Bayart-Matheron2} F. Bayart, E. Matheron, Dynamics of linear operators. Cambridge Tracts in Mathematics, 179. Cambridge University Press, Cambridge, 2009.

\bibitem{BMM} T. Berm\'udez, I. Marrero, A. Martin\'on, On the orbit of an $m$-isometry, Integral  Equations Operator Theory, {\bf 64} (2009), 487-494.

\bibitem{BoFe} P. Bourdon, N.  Feldman,  Somewhere dense orbits are everywhere dense. Indiana Univ. Math. J. \textbf{52} (2003), no. 3, 811-819.

\bibitem{ChanSanders} K. Chan, R. Sanders, A weakly hypercyclic operator that is not norm hypercyclic. J. Operator Theory \textbf{52} (2004), no. 1, 39-59.

\bibitem{Conway} J. B. Conway, A Course in Functional Analysis, 2nd Edition, Springer-Verlag, 1990.

\bibitem{Rosa} M. de la Rosa, Notes about weakly hypercyclic operators. J. Operator Theory \textbf{65} (2011), no. 1, 187-195.

\bibitem{Rosa-Read} M. de la Rosa, Ch. Read, A hypercyclic operator whose direct sum $T\oplus T$ is not hypercyclic. J. Operator Theory \textbf{61} (2009), no. 2, 369-380.

\bibitem {FaHe} M. Faghih Ahmadi, K. Hedayatian, Hypercyclicity and supercyclicity of $m$-isometric operators, Rocky Mountain J. of Math. {\bf 42} (1), (2012), 15-23.

\bibitem{Feldman} N. S. Feldman,  N-weakly hypercyclic and n-weakly supercyclic operators, J. Funct. Anal. 263 (2012), no. 8, 2255–2299.

\bibitem{Grivaux} S. Grivaux, Hypercyclic operators, mixing operators, and the bounded steps problem. J. Operator Theory \textbf{54 }(2005), no. 1, 147-168.

\bibitem{Grosse-Peris}  K. Grosse-Erdmann,   A. Peris,  Linear chaos. Universitext. Springer, London, 2011.


\bibitem{hms} P. Hoffmann, M. Mackey, M. \'{O}. Searc\'{o}id.  On the second parameter of an $(m,p)$-isometry. Integral Equations  Operator  Theory \textbf{71} (2011) 389-405.

\bibitem{fernandoleon}  F. Le\'{o}n-Saavedra, Operators with hypercyclic Ces\`{a}ro means. Studia Math. \textbf{152} (2002), no. 3, 201--215.

\bibitem{MuLe}  F. Le\'{o}n-Saavedra, V. M\"{u}ller,  Rotations of hypercyclic and supercyclic operators. Integral Equations Operator Theory \textbf{50} (2004), no. 3, 385-391.

\bibitem{LeRo}  F. Le\'{o}n-Saavedra,  M. P. Romero de la Rosa , Powers of convex-cyclic operators, Abstract and Applied Analysis, volume \textbf{2014} (2014), Article ID 631894, 3 pages.

\bibitem{Rezaei} H. Rezaei, On the convex hull generated by orbit of operators, Linear Algebra and its Applications, {\bf 438} (2013), 4190-4203.


\bibitem{Sanders05} R. Sanders, An isometric bilateral shift that is weakly supercyclic. Integral Equations Operator Theory \textbf{53} (2005), no. 4, 547-552.

\bibitem{Sanders04} R. Sanders, Weakly supercyclic operators. J. Math. Anal. Appl. \textbf{292} (2004), no. 1, 148-159.

    \bibitem{Seubert1} S. M. Seubert, Cyclic operators on shift coinvariant subspaces, Rocky Mountain J. Math. 24 (1994), no. 2, 719–727.

\bibitem{shkarin1} S. Shkarin,  Universal elements for non-linear operators and their applications. J. Math. Anal. Appl. \textbf{348} (2008), no. 1, 193-210.

\bibitem{shkarin2} S. Shkarin, Orbits of coanalytic Toeplitz operators and weak hypercyclicity, 	 arXiv:1210.3191 [math.FA].
%\bibitem{Seubert2} S. M. Seubert, Common cyclic vectors for diagonal operators on the spaces of entire functions    Rocky Mountain J. Math., to appear.

\end{thebibliography}
\end{document}